\documentclass[11pt,reqno]{amsart}

\usepackage[a4paper,
            top=30mm,bottom=30mm,
            left=35mm,right=35mm]{geometry}

\usepackage{amsmath, amsthm, amsfonts, amssymb, amscd, mathtools, bm} 
\usepackage{xcolor} 
\usepackage[unicode=true]{hyperref} 
\usepackage[shortlabels]{enumitem} 
\usepackage[T1]{fontenc}
\usepackage{empheq}
\usepackage{placeins}

\theoremstyle{plain}
\newtheorem{theorem}{Theorem}[section]
\newtheorem{lemma}[theorem]{Lemma}

\newtheorem{proposition}[theorem]{Proposition}

\theoremstyle{definition}

\theoremstyle{remark}

\numberwithin{equation}{section}

\newcommand{\eps}{\varepsilon}

\newcommand{\dx}{\mathrm{d} x}

\newcommand{\N}{\mathbb{N}}
\newcommand{\R}{\mathbb{R}}

\newcommand{\dm}{\mathrm{d}\mu}
\newcommand{\F}{\mathrm{F}}

\title[]{\boldmath A truncation criterion for compactness \\ in asymptotic $L_p$ spaces}

\author[]{Nuno J. Alves}

\address[N. J. Alves]{
      CEMSE Division, King Abdullah University of Science and Technology, Thuwal, 23955-6900, Saudi Arabia.}
\email{nuno.januarioalves@kaust.edu.sa}

\begin{document}

\begin{abstract}
We prove a compactness criterion for asymptotic $L_p$ spaces over arbitrary measure spaces. Total boundedness is characterized by almost equiboundedness together with total boundedness in $L_p$ of all truncations. As a consequence,
we obtain a simpler proof of the Kolmogorov--Riesz compactness theorem for asymptotic $L_p$ spaces on $\mathbb R^n$.
\end{abstract}

\subjclass[2020]{Primary 46A50, 46E30; Secondary 28A20}
\keywords{Asymptotic~$L_p$ spaces, compactness criterion, total boundedness, truncations, $\F$-spaces, almost equiboundedness, spaces of measurable functions}
\maketitle
\thispagestyle{empty} 


\section{Introduction}

The asymptotic $L_p$ space over a measure space $(X,\Sigma,\mu)$ is defined as
\begin{equation} \label{eq:Lambda_space}
\Lambda^p(X)=\left\{ f:X\to\R \text{ measurable} \ \Big| \ 
\int_X \min(|f|,1)^p \, \dm < \infty\right\},
\end{equation}
where $1\leq p<\infty$ and functions are identified if they agree almost
everywhere. The space $\Lambda^p(X)$ is naturally endowed with the topology
generated by the functional
\begin{equation} \label{eq:F-norm}
f\mapsto \|\!\min(|f|,1)\|_p.
\end{equation}
Here and throughout, $\|\cdot \|_p$ denotes the norm of the standard Lebesgue space $L^p(X)$.

The functional in~\eqref{eq:F-norm} is an $\mathrm F$-norm: it satisfies the usual properties of a norm except
for homogeneity, which is replaced by the following two conditions:
\[
\|\!\min(|\lambda f|,1) \|_p \leq \|\!\min(| f|,1) \|_p,
\]
for all $|\lambda| \leq 1$ and all $f \in \Lambda^p(X)$, and  
\[
\lim_{\lambda \to 0} \|\!\min(|\lambda f|,1) \|_{p} = 0,
\]
for all $f \in \Lambda^p(X)$; see \cite{alves2025F}. 
With this topology, $\Lambda^p(X)$ is an $\mathrm F$-space, that is, a complete
metrizable topological vector space. We recall the proof of this fact in
Section~\ref{sec:asymptotic}. For background on metric linear spaces and
$\mathrm F$-spaces, see~\cite{kalton1984space, rolewicz1985metric}.

The spaces $\Lambda^p(X)$ were introduced in~\cite{alves2025F} as spaces of
functions almost in $L_p$, equipped with the topology of asymptotic
$L_p$-convergence; see also~\cite{alves2024mode,alves2024relation}. A real-valued measurable function $f$ is said to be almost in $L_p$ if, for every $\delta>0$, there exists a measurable set $E_\delta\subseteq X$ such that
\[
\mu(E_\delta)<\delta
\qquad \text{and} \qquad
f\chi_{E_\delta^c}\in L^p(X),
\]
where $E_\delta^c = X \setminus E_\delta$. Related almost-$L_p$ spaces had previously appeared in connection with factorization
and representation questions in functional analysis; see~\cite{galdames2012optimal,calabuig2019representation}. The equivalence between
the original almost-$L_p$ definition and~\eqref{eq:Lambda_space} is given
in Proposition~\ref{prop:charact}.

Although the notation suggests a close relationship with $L^p(X)$, the spaces
$\Lambda^p(X)$ are fundamentally different from Lebesgue spaces. For example,
it was shown in~\cite{alves2025F} that, in the Euclidean setting, $\Lambda^p(\R^n)$ is neither locally convex nor locally bounded, and that its continuous dual is trivial. More generally, on nonatomic measure spaces, the
same type of behavior for $\Lambda^p(X)$ can be deduced from the general theory of metric linear spaces; see~\cite{rolewicz1985metric}.

The main result of this note is an abstract compactness criterion for $\Lambda^p(X)$ over arbitrary measure spaces. It characterizes total boundedness in terms of almost equiboundedness and total boundedness in $L^p(X)$ of all truncations; see~Theorem~\ref{thm:main}. In the Euclidean case, this criterion gives, by applying the classical Kolmogorov--Riesz theorem in $L^p(\R^n)$ to the truncated families, a short proof of the Kolmogorov--Riesz compactness theorem
in $\Lambda^p(\R^n)$ obtained in~\cite{alves2026kolmogorov}. This Euclidean
compactness theorem has also been used to deduce a compactness result of Rellich--Kondrachov type in $\Lambda^p(\R^n)$, which in turn led to a well-posedness theory for $p$-Schr\"odinger equations with integrable data and
confinement in measure; see~\cite{alves2026theory}.

Compactness criteria in function spaces have been extensively studied in many different settings. For the classical Kolmogorov--Riesz theorem in $L^p(\R^n)$, see~\cite{hanche2010kolmogorov, hanche2019improvement}. Related criteria have been obtained for spaces of measurable functions~\cite{krotov2012criteria}, for variable exponent and variable summability spaces~\cite{bandaliyev2018relatively, dymek2023compactness, gorka2015almost}, for Banach and quasi-Banach function spaces~\cite{caetano2016compactness, gorka2016arzela, gorka2019banach, guo2020relatively}, and in other settings related to Riesz--Kolmogorov compactness criteria~\cite{georgescu2004riesz}. The present paper thus fits into this line of results by providing an abstract compactness criterion for asymptotic $L_p$ spaces over arbitrary measure spaces, reducing total boundedness in $\Lambda^p(X)$ to total boundedness of the truncated families in $L^p(X)$ together with a uniform control of large values.

The paper is organized as follows. In Section~\ref{sec:asymptotic}, we recall the basic structure of~$\Lambda^p(X)$, prove that it is an $\mathrm F$-space under the truncation definition, and establish the equivalence between this definition and the original almost-$L_p$ formulation from~\cite{alves2025F}. Section~\ref{sec:compactness} contains the main compactness criterion. In Section~\ref{sec:kolmogorov}, we recover the
Kolmogorov--Riesz compactness theorem in $\Lambda^p(\R^n)$. Finally,
Section~\ref{sec:vitali} gives a short proof, based on the truncation definition, of the Vitali convergence theorem in~$\Lambda^p(X)$ obtained in~\cite{alves2025F}.

\section{Asymptotic $L_p$ spaces} \label{sec:asymptotic}

In this section we collect the basic structural properties of $\Lambda^p(X)$. Although these facts are elementary, we include the proofs in order to make clear that the truncation definition~\eqref{eq:Lambda_space} gives a complete metrizable topological vector space. We also recall the relation between this definition and the original formulation from~\cite{alves2025F}.

We first show that $\Lambda^p(X)$ is an $\mathrm F$-space. In the terminology of Kalton, Peck, and Roberts~\cite{kalton1984space}, this means that it is a metrizable topological vector space which is complete with respect to a translation-invariant metric. In the present case, the metric is given by
\[
(f,g)\mapsto \|\!\min(|f-g|,1)\|_p.
\]

\begin{theorem}\label{thm:Lambda_F-space}
Let $(X,\Sigma,\mu)$ be a measure space and let $1\le p<\infty$. Then $\Lambda^p(X)$ is an $\mathrm{F}$-space. Moreover, when $\mu(X) < \infty$, the space $\Lambda^p(X)$ consists of all real-valued measurable functions on~$X$ with the topology of convergence in measure.
\end{theorem}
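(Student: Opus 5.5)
Write $\rho(f)=\|\min(|f|,1)\|_p$ and $d(f,g)=\rho(f-g)$. The plan has four parts: verify that $\rho$ is an $\mathrm F$-norm on the vector space $\Lambda^p(X)$, deduce that $d$ is a translation-invariant metric inducing a vector topology, prove completeness, and treat the finite-measure case separately.

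\textbf{Vector space and $\mathrm F$-norm.} The key pointwise inequality is
\[
\min(|a+b|,1)\le \min(|a|,1)+\min(|b|,1),
\]
which follows by considering whether either $|a|\ge1$ or $|b|\ge1$. Minkowski's inequality in $L^p(X)$ then gives $\rho(f+g)\le\rho(f)+\rho(g)$. In particular $\Lambda^p(X)$ is closed under addition.

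For scalars with $|\lambda|\le1$ we have pointwise $\min(|\lambda f|,1)\le\min(|f|,1)$, so $\rho(\lambda f)\le\rho(f)$. For $|\lambda|\ge1$, take an integer $k\ge|\lambda|$; the triangle inequality gives $\rho(\lambda f)\le k\,\rho(f)$. Hence $\Lambda^p(X)$ is a vector space.

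Also $\rho(f)=0$ forces $f=0$ a.e. For $\lambda\to0$, the functions $\min(|\lambda f|,1)^p$ tend to $0$ pointwise and are dominated by $\min(|f|,1)^p\in L^1$, so dominated convergence gives $\rho(\lambda f)\to0$.

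\textbf{Vector topology.} Continuity of addition is immediate from the triangle inequality. For scalar multiplication, write
\[
\lambda f-\lambda_0f_0=\lambda(f-f_0)+(\lambda-\lambda_0)f_0.
\]
The first term is bounded by $k\,\rho(f-f_0)$ with $k$ uniform for bounded $\lambda$. The second term tends to $0$ by the $\lambda\to0$ property just proved. So $d$ is a translation-invariant metric giving a metrizable topological vector space.

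\textbf{Completeness (the main step).} Let $(f_n)$ be $d$-Cauchy.
\begin{enumerate}[(a)]
\item Pass to a subsequence with $\rho(f_{n_{k+1}}-f_{n_k})<4^{-k}$.
\item Chebyshev on the set where $|f_{n_{k+1}}-f_{n_k}|\ge2^{-k}$ (for $k\ge1$, where $\min(\cdot,1)$ does not alter the threshold) shows this set has measure at most $2^{kp}4^{-kp}=2^{-kp}$, which is summable.
\item Borel--Cantelli gives a.e.\ convergence of $f_{n_k}$ to a measurable $f$. Note that $\sigma$-finiteness is not needed, since only these exceptional sets are used.
\item Fatou's lemma applied to $\min(|f_{n_k}-f_{n_j}|,1)^p$ as $j\to\infty$ gives $\rho(f_{n_k}-f)\le\sum_{i\ge k}4^{-i}\to0$.
\item Hence $f-f_{n_k}\in\Lambda^p(X)$, so $f\in\Lambda^p(X)$. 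A Cauchy sequence with a convergent subsequence converges.
\end{enumerate}
I expect this step to be the main obstacle, mainly in keeping the bookkeeping uniform for an arbitrary, possibly non-$\sigma$-finite, measure.

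\textbf{Finite measure.} If $\mu(X)<\infty$, every measurable $f$ satisfies $\int\min(|f|,1)^p\,\dm\le\mu(X)$, so $\Lambda^p(X)$ is all measurable functions. For the topology we need $\rho(f_n-f)\to0$ exactly when $f_n\to f$ in measure.

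\emph{$\rho$-convergence implies convergence in measure.} For $0<\eps\le1$, Chebyshev gives
\[
\mu(|f_n-f|\ge\eps)\le\eps^{-p}\rho(f_n-f)^p.
\]

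\emph{Convergence in measure implies $\rho$-convergence.} Split $X$ according to whether $|f_n-f|<\eps$:
\[
\rho(f_n-f)^p\le\eps^p\mu(X)+\mu(|f_n-f|\ge\eps).
\]

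Since both topologies are metrizable, sequential equivalence suffices.
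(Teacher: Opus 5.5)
Your proposal is correct and follows essentially the same route as the paper: the same splitting $\lambda f-\lambda_0f_0=\lambda(f-f_0)+(\lambda-\lambda_0)f_0$ with dominated convergence for continuity of scalar multiplication, and completeness via an almost everywhere convergent subsequence followed by Fatou's lemma. The differences are only presentational. You bound $\rho(\lambda f)\le k\,\rho(f)$ for an integer $k\ge|\lambda|$ instead of using the pointwise estimate $\min(|\lambda a|,1)\le\max\{|\lambda|,1\}\min(|a|,1)$. You extract the subsequence explicitly by Borel--Cantelli rather than passing through Cauchy-in-measure. You prove the finite-measure equivalence with convergence in measure directly, whereas the paper cites it; your splitting estimate on a finite-measure set is the same one the paper uses in its Vitali convergence theorem at the end.
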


The final assertion in Theorem~\ref{thm:Lambda_F-space} should be understood in two parts. If $\mu(X)<\infty$, then every real-valued measurable function belongs to $\Lambda^p(X)$, since \[\min(|f|,1)^p\leq 1.\] The statement about the topology uses the fact that, on finite measure spaces, convergence with respect to the $\F$-norm~\eqref{eq:F-norm} is equivalent to convergence in measure; see~\cite{alves2024mode, alves2024relation, alves2025F}.

That $\Lambda^p(X)$ is a vector space follows immediately from the following two inequalities
\begin{equation} \label{eq:ineq_scalar_prod}
\min(|\lambda a|,1)^p \leq \max\{|\lambda|^p,1\}  \min(|a|,1)^p,
\end{equation}
and
\begin{equation} \label{eq:ineq_triang}
\min(|a+b|,1)^p \leq 2^{p-1}\big( \min(|a|,1)^p + \min(|b|,1)^p \big),
\end{equation}
valid for all $\lambda,a,b \in \R$ and $1 \leq p < \infty$.

From the triangle inequality it also follows that the operation of addition is continuous with respect to the metric of~$\Lambda^p(X)$. In the next lemma we show that the operation of scalar multiplication is also continuous in this
topology, thus establishing that~$\Lambda^p(X)$ is a metrizable topological vector space.

\begin{lemma}
Assume that $\{\lambda_k\}_{k \in \N}$ is a sequence of real numbers converging to a real number $\lambda$, and assume that $\{f_k \}_{k \in \N}$ is a sequence of functions in $\Lambda^p(X)$ converging to a function $f \in \Lambda^p(X)$. Then 
\[\lambda_k \, f_k \to \lambda \, f \qquad \text{in } \Lambda^p(X) \qquad \text{as } k \to \infty. \]
\end{lemma}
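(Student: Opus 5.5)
We split the difference into a term where $f_k$ varies and a term where only the scalar varies, and control each by the two inequalities \eqref{eq:ineq_scalar_prod} and \eqref{eq:ineq_triang}. Concretely, we write
\[
\lambda_k f_k - \lambda f = \lambda_k (f_k - f) + (\lambda_k - \lambda) f .
\]
By \eqref{eq:ineq_triang} it suffices to show that both $\int_X \min(|\lambda_k (f_k-f)|,1)^p\,\dm$ and $\int_X \min(|(\lambda_k-\lambda) f|,1)^p\,\dm$ tend to zero.

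For the first term we use that $\{\lambda_k\}$ is convergent, hence bounded, say $|\lambda_k|\le M$ with $M\ge 1$. Then \eqref{eq:ineq_scalar_prod} gives
\[
\int_X \min(|\lambda_k (f_k-f)|,1)^p\,\dm \le M^p \int_X \min(|f_k-f|,1)^p\,\dm ,
\]
and the right-hand side tends to zero because $f_k\to f$ in $\Lambda^p(X)$.

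For the second term, set $\eps_k=\lambda_k-\lambda\to0$. We need
\[
\lim_{\eps\to0}\int_X \min(|\eps f|,1)^p\,\dm = 0
\]
for a fixed $f\in\Lambda^p(X)$. This is the main point, and we argue by dominated convergence. For $|\eps|\le1$ we have
\[
\min(|\eps f|,1)\le \min(|f|,1),
\]
pointwise: if $|f|\le1$ then $|\eps f|\le|f|$, and otherwise the right-hand side equals $1$. Hence the integrands are dominated by $\min(|f|,1)^p$, which is integrable since $f\in\Lambda^p(X)$. Pointwise, $\min(|\eps f(x)|,1)\to0$ wherever $f(x)$ is finite, which holds almost everywhere because $f$ is real-valued. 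Dominated convergence then gives the limit. No finiteness of $\mu$ is needed, which is exactly why the domination by the truncation $\min(|f|,1)^p$, rather than by $|f|^p$, is essential.

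Combining the two estimates through \eqref{eq:ineq_triang}, we get
\[
\int_X \min(|\lambda_k f_k-\lambda f|,1)^p\,\dm \to 0 ,
\]
that is, $\lambda_k f_k\to\lambda f$ in $\Lambda^p(X)$.
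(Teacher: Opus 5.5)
Your proof is correct and follows essentially the same route as the paper: you use the same splitting $\lambda_k f_k-\lambda f=\lambda_k(f_k-f)+(\lambda_k-\lambda)f$, bound the first term with \eqref{eq:ineq_scalar_prod}, and handle the second by dominated convergence with majorant $\min(|f|,1)^p$. The only cosmetic difference is that you combine the two terms through the quasi-triangle inequality \eqref{eq:ineq_triang} at the level of $p$-th powers, whereas the paper uses the triangle inequality of the $\F$-norm directly.
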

\begin{proof}
Since $\lambda_k\to\lambda$, there exists $C>0$ such that
$|\lambda_k|\le C$ for every $k\in\N$. Using the triangle inequality and~\eqref{eq:ineq_scalar_prod} we deduce
\[
\begin{aligned}
\|\min(|\lambda_k f_k-\lambda f|,1)\|_p
&\le
\|\min(|\lambda_k(f_k-f)|,1)\|_p
+
\|\min(|(\lambda_k-\lambda)f|,1)\|_p \\
&\le
\max\{C,1\}\|\min(|f_k-f|,1)\|_p
+
\|\min(|(\lambda_k-\lambda)f|,1)\|_p .
\end{aligned}
\]
The first term tends to $0$, since $f_k\to f$ in $\Lambda^p(X)$.

For the second term, we have
\[
\min(|(\lambda_k-\lambda)f(x)|,1)^p\to0
\]
for almost every $x\in X$. Moreover, for all sufficiently large $k$,
$|\lambda_k-\lambda|\le1$, and hence
\[
\min(|(\lambda_k-\lambda)f|,1)^p
\le
\min(|f|,1)^p.
\]
Since $f\in\Lambda^p(X)$, the right-hand side is integrable. Therefore, by the
dominated convergence theorem,
\[
\|\!\min(|(\lambda_k-\lambda)f|,1)\|_p\to0.
\]

Consequently,
\[
\|\!\min(|\lambda_k f_k-\lambda f|,1)\|_p\to0,
\]
which means that $\lambda_k f_k\to\lambda f$ in $\Lambda^p(X)$.
\end{proof}

To conclude the proof of Theorem~\ref{thm:Lambda_F-space} it remains to prove that the metric of $\Lambda^p(X)$ is complete. This is the content of the next lemma; the standard proof is included for convenience.

\begin{lemma}
Let $\{f_k \}_{k \in \N}$ be a Cauchy sequence in $\Lambda^p(X)$. Then, there exists $f \in \Lambda^p(X)$ such that 
\[f_k \to  f \qquad \text{in } \Lambda^p(X) \qquad \text{as } k \to \infty. \]
\end{lemma}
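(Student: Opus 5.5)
The plan is to follow the standard Riesz--Fischer argument: pass to a fast subsequence, build the limit pointwise, identify it as the limit in the $\mathrm F$-norm, and finally return to the full sequence.

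Write $\rho(g)=\|\min(|g|,1)\|_p$. Since $\{f_k\}$ is Cauchy, we choose indices $k_1<k_2<\cdots$ with
\[
\rho(f_{k_{j+1}}-f_{k_j})\le 2^{-j}
\qquad \text{for every } j.
\]
The first task is almost everywhere convergence of $f_{k_j}$. Let $g_j=f_{k_{j+1}}-f_{k_j}$ and $h_j=\min(|g_j|,1)$. Minkowski's inequality in $L^p(X)$ gives
\[
\Big\|\sum_j h_j\Big\|_p\le\sum_j 2^{-j}<\infty,
\]
so $\sum_j h_j<\infty$ almost everywhere. At such a point $h_j<1$ for all large $j$, and then $h_j=|g_j|$. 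Hence $\sum_j |g_j(x)|<\infty$ for almost every $x$, so $f_{k_j}(x)$ converges. We define $f$ as this pointwise limit, and set $f=0$ on the null exceptional set; $f$ is measurable.

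Next we show $f_{k_j}\to f$ in $\Lambda^p(X)$. Fix $j$. For almost every $x$,
\[
\min(|f-f_{k_j}|,1)\le\liminf_{m\to\infty}\min(|f_{k_m}-f_{k_j}|,1)\le \sum_{i\ge j} h_i ,
\]
where the second inequality uses $\min(|a+b|,1)\le\min(|a|,1)+\min(|b|,1)$ together with continuity of $\min(|\cdot|,1)$. Taking $L^p$ norms and applying Minkowski again gives $\rho(f-f_{k_j})\le 2^{1-j}\to0$.

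Membership $f\in\Lambda^p(X)$ follows from the same triangle inequality:
\[
\rho(f)\le \rho(f-f_{k_1})+\rho(f_{k_1})<\infty .
\]
Alternatively one can invoke \eqref{eq:ineq_triang}, since both $f-f_{k_1}$ and $f_{k_1}$ lie in $\Lambda^p(X)$.

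Finally, a Cauchy sequence with a convergent subsequence converges to the same limit:
\[
\rho(f_k-f)\le\rho(f_k-f_{k_j})+\rho(f_{k_j}-f),
\]
and both terms are small for large $k$ and $j$.

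The main obstacle is the almost everywhere convergence step. Summability of the truncations $h_j$ does not immediately control $|g_j|$, and the key observation is that almost everywhere the terms $h_j$ eventually drop below $1$, after which $h_j=|g_j|$. The second delicate point is passing the limit inside the truncation, which we handle through the pointwise subadditivity of $\min(|\cdot|,1)$.
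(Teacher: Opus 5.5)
Your proof is correct, and it reaches the almost everywhere limit, and estimates the distance to it, by a different mechanism than the paper.

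The paper first shows that the sequence is Cauchy in measure, using
$\alpha^p\mu(\{|f_k-f_\ell|>\alpha\})\le\int_X\min(|f_k-f_\ell|,1)^p\,\dm$. It then invokes the classical theorem that such a sequence has an a.e.\ convergent subsequence. Finally, it applies Fatou's lemma together with the $\eps$-Cauchy condition to get $\|\min(|f_{k_j}-f|,1)\|_p\le\eps$. Membership $f\in\Lambda^p(X)$ and convergence of the full sequence then follow from the triangle inequality, exactly as in your last two steps.

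You instead run the Riesz--Fischer argument directly on the truncations. The geometric choice of subsequence, Minkowski's inequality for the countable sum $\sum_j h_j$ (finite Minkowski plus monotone convergence), and the observation that $h_j<1$ forces $h_j=|g_j|$ give a.e.\ convergence without any reference to convergence in measure. The pointwise subadditivity of $\min(|\cdot|,1)$ then plays the role of Fatou's lemma.

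The paper's route is shorter because it delegates the a.e.\ step to a standard measure-theoretic fact. Yours is self-contained, mirrors the usual completeness proof for $L^p(X)$, and gives the explicit rate $\|\min(|f-f_{k_j}|,1)\|_p\le 2^{1-j}$.
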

\begin{proof}
Since $\{f_k\}_{k\in\N}$ is Cauchy in $\Lambda^p(X)$, it is Cauchy in
measure. Indeed, for $0<\alpha<1$,
\[
\alpha^p\mu(\{|f_k-f_\ell|>\alpha\})
\le
\int_X\min(|f_k-f_\ell|,1)^p\,\dm,
\]
and the right-hand side tends to $0$ as $k,\ell\to\infty$.

Hence, there exist a subsequence $\{f_{k_j}\}_{j\in\N}$ and a real-valued measurable
function~$f$ such that
\[
f_{k_j}\to f
\qquad\text{a.e.\ on }X.
\]

We show that this subsequence converges to $f$ in $\Lambda^p(X)$. Let
$\eps>0$. Since $\{f_k\}_{k\in\N}$ is Cauchy in $\Lambda^p(X)$, there exists
$N\in\N$ such that
\[
\|\!\min(|f_k-f_\ell|,1)\|_p<\eps
\]
whenever $k,\ell\ge N$. Fix $j$ such that $k_j\ge N$. Since
$f_{k_\ell}\to f$ a.e., Fatou's lemma gives
\[
\begin{aligned}
\int_X\min(|f_{k_j}-f|,1)^p\,\dm
&\le
\liminf_{\ell\to\infty}
\int_X\min(|f_{k_j}-f_{k_\ell}|,1)^p\,\dm \\
&\le
\eps^p.
\end{aligned}
\]
Thus
\[
\|\!\min(|f_{k_j}-f|,1)\|_p\le \eps
\]
for all sufficiently large $j$. Thus
\[
\|\!\min(|f_{k_j}-f|,1)\|_p\le \eps
\]
for all sufficiently large $j$. Choosing one such $j$, the triangle inequality
gives $f\in\Lambda^p(X)$. Hence
\[
f_{k_j}\to f
\qquad\text{in }\Lambda^p(X).
\]
Since the original sequence is Cauchy in $\Lambda^p(X)$ and has a subsequence converging to~$f$, the whole sequence converges to~$f$ in $\Lambda^p(X)$, again by the triangle inequality.
\end{proof}

We next give the characterization that explains the equivalence between the original definition of $\Lambda^p(X)$ in~\cite{alves2025F} and the one adopted in the present paper.

\begin{proposition} \label{prop:charact}
Fix $1\le p<\infty$, and let $f$ be a real-valued measurable function on $X$.
The following statements are equivalent:
\begin{enumerate}[(i)]
\item $f$ is almost in $L_p$.
\smallskip
\item There exists a sequence $\{f_k\}_{k\in\N}$ in $L^p(X)$ such that
\[
\|\!\min(|f_k-f|,1)\|_p\to0
\qquad\text{as } k\to\infty.
\]

\item $\min(|f|,1)\in L^p(X)$.
\end{enumerate}
\end{proposition}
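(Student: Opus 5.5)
I will prove the cycle (iii)$\Rightarrow$(ii)$\Rightarrow$(i)$\Rightarrow$(iii). The basic tool is the set $A=\{|f|\ge 1\}$. On $A$ we have $\min(|f|,1)=1$, so whenever $\min(|f|,1)\in L^p(X)$ we get $\mu(A)<\infty$. More generally, for a measurable $g$ and $0<\alpha\le 1$, Chebyshev gives
\[
\alpha^p\mu(\{|g|>\alpha\})\le \int_X\min(|g|,1)^p\,\dm .
\]

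\emph{(iii)$\Rightarrow$(ii).} I will take $f_k=f\chi_{\{|f|\le k\}}$. This function is bounded by $k$. It is supported where $|f|<1$, on which $|f_k|=\min(|f|,1)$, or on $\{1\le|f|\le k\}\subseteq A$, which has finite measure. Hence $f_k\in L^p(X)$. Moreover, $\min(|f_k-f|,1)^p=\chi_{\{|f|>k\}}$. Since $\{|f|>k\}$ decreases to the empty set inside the finite-measure set $A$, we get $\mu(\{|f|>k\})\to0$ by continuity from above.

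\emph{(ii)$\Rightarrow$(i).} Fix $\delta>0$ and choose $k$ with $\|\min(|f_k-f|,1)\|_p^p<\delta$. Set $E_\delta=\{|f_k-f|\ge1\}$. Then
\[
\mu(E_\delta)\le\int_X\min(|f_k-f|,1)^p\,\dm<\delta .
\]
On $E_\delta^c$ we have $|f-f_k|=\min(|f-f_k|,1)$. It follows that
\[
|f|\chi_{E_\delta^c}\le |f_k|+\min(|f-f_k|,1),
\]
and both terms on the right are in $L^p(X)$. So $f\chi_{E_\delta^c}\in L^p(X)$.

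\emph{(i)$\Rightarrow$(iii).} Take $\delta=1$, so there is $E$ with $\mu(E)<1$ and $f\chi_{E^c}\in L^p(X)$. Then
\[
\int_X\min(|f|,1)^p\,\dm\le \mu(E)+\int_{E^c}|f|^p\,\dm<\infty .
\]

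No step is a real obstacle. The only point needing care is the finiteness of $\mu(A)$ in (iii)$\Rightarrow$(ii): it is what lets continuity from above apply, and it is also what makes the $f_k$ belong to $L^p(X)$ on an arbitrary, possibly non-$\sigma$-finite, measure space.
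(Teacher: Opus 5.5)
Your proof is correct. It runs the cycle in the opposite direction from the paper, which proves (i)$\Rightarrow$(ii)$\Rightarrow$(iii)$\Rightarrow$(i).

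In the paper, each step is short:
\begin{itemize}
\item (i)$\Rightarrow$(ii) is immediate from sets $E_k$ with $\mu(E_k)<1/k$.
\item (ii)$\Rightarrow$(iii) applies the triangle inequality for $\|\!\min(|\cdot|,1)\|_p$ once: being at finite distance from some $f_N\in L^p(X)$ forces $\|\!\min(|f|,1)\|_p<\infty$.
\item (iii)$\Rightarrow$(i) uses the level sets $\{|f|>k\}$, continuity from above (justified by $\mu(\{|f|>1\})<\infty$), and the bound $|f|^p\le K^p\min(|f|,1)^p$ on $\{|f|\le K\}$.
\end{itemize}

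Your (iii)$\Rightarrow$(ii) uses the same level sets and the same finiteness fact. You package them into an explicit approximating sequence with the exact identity $\|\!\min(|f_k-f|,1)\|_p^p=\mu(\{|f|>k\})$. In the paper this cut-off sequence only appears implicitly, through the composite (iii)$\Rightarrow$(i)$\Rightarrow$(ii).

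Your (ii)$\Rightarrow$(i) has no counterpart in the paper. Instead of passing through (iii), you take the exceptional set $E_\delta=\{|f_k-f|\ge 1\}$ directly from the approximating sequence. You control it with a Chebyshev bound and a pointwise estimate on its complement, so you never need the $\mathrm F$-norm triangle inequality. Your (i)$\Rightarrow$(iii) is then a direct one-line estimate.

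The paper's ordering makes each step either trivial or a single structural fact about the $\mathrm F$-norm, which fits the metric viewpoint of its Section 2. Your ordering gives a fully pointwise argument, an explicit rate of approximation, and a description of admissible exceptional sets in terms of any approximating sequence.
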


\begin{proof} We first show that (i) implies (ii). For each $k\in\N$, choose
$E_k\in\Sigma$ such that $\mu(E_k)<1/k$ and
\[
f_k:=f\chi_{E_k^c}\in L^p(X).
\]
Then
\begin{align*}
\|\!\min(|f_k-f|,1)\|_p^p
&=
\int_{E_k}\min(|f\chi_{E_k^c}-f|,1)^p\,\dm  \le
\mu(E_k)
<
\frac1k \to 0,
\end{align*}
as $k \to \infty$.

Next, we show that (ii) implies (iii). Choose $N\in\N$ such that
\[
\|\!\min(|f_N-f|,1)\|_p<1.
\]
Since
\[
\min(|f|,1)
\le
\min(|f-f_N|,1)+\min(|f_N|,1),
\]
the triangle inequality gives
\[
\|\!\min(|f|,1)\|_p
\le
\|\!\min(|f-f_N|,1)\|_p+\|\!\min(|f_N|,1)\|_p
<
1+\|f_N\|_p
<
\infty.
\]
Hence $\min(|f|,1)\in L^p(X)$. 

Finally, we prove that (iii) implies (i). For each $k\in\N$, set
\[
E_k:=\{|f|>k\}.
\]
Then $E_{k+1}\subseteq E_k$ for every $k\in\N$, and
$\mu(E_1)<\infty$, because
\[
\chi_{E_1}\le \min(|f|,1)^p\in L^1(X).
\]
Moreover, the countable intersection
\[
\bigcap_{k\in\N}E_k
\]
has measure zero, since $f$ is real-valued almost everywhere. Hence
\[
\mu(E_k)\to0
\qquad\text{as } k\to\infty.
\]
Given $\delta>0$, choose $K_\delta\in\N$ such that
\[
\mu(E_{K_\delta})<\delta.
\]
On $E_{K_\delta}^c$ we have $|f|\le K_\delta$, and therefore
\[
|f|^p
\le
K_\delta^p\min(|f|,1)^p.
\]
It follows that
\[
f\chi_{E_{K_\delta}^c}\in L^p(X).
\]
Thus $f$ is almost in $L_p$.
\end{proof}

We finish this section with two elementary observations. First, the spaces $\Lambda^p(X)$ are nested as $p$ increases. Second, for bounded functions, belonging to $\Lambda^p(X)$ is the same as belonging to $L^p(X)$.

\begin{proposition}\label{prop:nesting}
Let $1\le p\le q<\infty$. Then
\[
\Lambda^p(X)\subseteq \Lambda^q(X).
\]
\end{proposition}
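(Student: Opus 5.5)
The inclusion follows from a pointwise comparison of the integrands. Write $t = \min(|f(x)|,1) \in [0,1]$. For $p \le q$ and $t\in[0,1]$ we have $t^q \le t^p$, since $t^{q-p}\le 1$. Therefore
\[
\min(|f|,1)^q \le \min(|f|,1)^p \qquad \text{pointwise on } X,
\]
and integrating over $X$ gives
\[
\int_X \min(|f|,1)^q \,\dm \le \int_X \min(|f|,1)^p\,\dm < \infty
\]
for every $f \in \Lambda^p(X)$. By the definition~\eqref{eq:Lambda_space}, this means $f \in \Lambda^q(X)$.

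The same inequality also says something about the topologies. It gives $\|\!\min(|f|,1)\|_q^q \le \|\!\min(|f|,1)\|_p^p$, so the inclusion map $\Lambda^p(X)\hookrightarrow\Lambda^q(X)$ is continuous: convergence in the $\F$-norm~\eqref{eq:F-norm} with exponent $p$ implies convergence with exponent $q$. This is worth recording alongside the inclusion, but the inclusion itself is the whole statement.

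No step is a real obstacle. The only point that needs care is that the truncation at level $1$ is what makes the comparison go in the right direction. For the Lebesgue spaces on a general measure space, $L^p \subseteq L^q$ fails, because large values of $|f|$ reverse the inequality $t^q \le t^p$. The truncation removes exactly those large values, so the inclusion holds here without any assumption on $\mu$.
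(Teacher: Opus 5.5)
Your proof is correct and matches the paper's argument, the pointwise bound $\min(|a|,1)^q\le\min(|a|,1)^p$ followed by integration over $X$. Your extra remark on continuity of the inclusion is also correct, and goes slightly beyond what the paper records.
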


\begin{proof}
The result follows from the fact that, for $1 \leq p \leq q < \infty$, one has
\[
\min(|a|,1)^q\le \min(|a|,1)^p,
\]
for every $a \in \R$.
\end{proof}

\begin{proposition}\label{prop:bounded-part}
Let $M>0$, and let $f$ be a measurable function on $X$ such that
$|f|\le M$ almost everywhere. Then
\[
f\in \Lambda^p(X)
\qquad\text{if and only if}\qquad
f\in L^p(X).
\]
Consequently,
\[
L^\infty(X)\cap\Lambda^p(X)=L^\infty(X)\cap L^p(X).
\]
\end{proposition}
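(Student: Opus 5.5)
The plan is to compare $|f|^p$ pointwise with $\min(|f|,1)^p$, using two-sided bounds that hold almost everywhere because $|f|\le M$. Set $c_M:=\max\{M,1\}$. We will establish, for almost every $x\in X$,
\[
\min(|f(x)|,1)^p \le |f(x)|^p \le c_M^p\,\min(|f(x)|,1)^p .
\]

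For the left inequality, note that $\min(|a|,1)\le |a|$ for every $a\in\R$.

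For the right inequality, split according to the size of $|f(x)|$:
\begin{enumerate}[(a)]
\item If $|f(x)|\le 1$, then $\min(|f(x)|,1)=|f(x)|$, and the bound holds because $c_M\ge1$.
\item If $1<|f(x)|\le M$, then $\min(|f(x)|,1)=1$, so $|f(x)|^p\le M^p\le c_M^p\cdot 1$.
\end{enumerate}
This step is the one place where the hypothesis $|f|\le M$ is used. It is also the only point needing any care: one must observe that $\min(|f|,1)$ may only be compared with $|f|$ up to the factor $M$, and this works precisely because $|f|$ is bounded. Alternatively, one could invoke \eqref{eq:ineq_scalar_prod} applied to $a=f/M$ and $\lambda=M$, but the direct case split is cleaner.

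Integrating the two-sided bound over $X$ gives
\[
\|\!\min(|f|,1)\|_p\le \|f\|_p\le c_M\,\|\!\min(|f|,1)\|_p .
\]
The right-hand inequality shows that $f\in\Lambda^p(X)$ implies $f\in L^p(X)$. The left-hand inequality gives the converse. This inclusion also follows directly from implication (ii)$\Rightarrow$(iii) of Proposition~\ref{prop:charact}, applied with the constant sequence $f_k=f$.

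For the consequence, take $f\in L^\infty(X)$ and let $M=\|f\|_\infty$ (or $M=1$ if $\|f\|_\infty=0$). Then $|f|\le M$ almost everywhere, so the first part shows that $f\in\Lambda^p(X)$ if and only if $f\in L^p(X)$. Intersecting with $L^\infty(X)$ therefore yields
\[
L^\infty(X)\cap\Lambda^p(X)=L^\infty(X)\cap L^p(X).
\]
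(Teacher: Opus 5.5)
Your proof is correct and matches the paper's: the two-sided bound $\|\min(|f|,1)\|_p\le\|f\|_p\le\max\{1,M\}\|\min(|f|,1)\|_p$ is exactly the paper's argument. One small slip in your parenthetical aside: to get the nontrivial direction from \eqref{eq:ineq_scalar_prod} you need $\lambda=1/M$ and $a=f$, since your choice $a=f/M$, $\lambda=M$ only gives $\min(|f|,1)^p\le\max\{1,M^{-p}\}|f|^p$, which is the trivial direction.
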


\begin{proof}
From the hypothesis $|f| \leq M$ a.e., we deduce
\begin{equation}
\|\!\min(|f|,1)\|_p
\le
\|f\|_p
\le
\max\{1,M\}\|\!\min(|f|,1)\|_p,
\end{equation}
from which the result clearly follows.
\end{proof}

\section{Compactness criterion} \label{sec:compactness}

We now state and prove the main compactness criterion of the paper. It says that total
boundedness of a family in $\Lambda^p(X)$ can be characterized by two
conditions: the total boundedness in $L_p$ of each truncated family, and the
uniform control of large values, expressed through approximation by
truncations. 

For $M>0$, we define
\begin{equation} \label{eq:truncations}
T_M(a):=\max\big\{\!-M,\min\{a,M\}\big\}, \qquad a\in\R,
\end{equation}
and write $T_Mf:=T_M\circ f$. 

\begin{theorem} \label{thm:main}
Let $(X,\Sigma,\mu)$ be a measure space, let $1\leq p<\infty$, and let $\mathcal F\subseteq\Lambda^p(X)$. Then $\mathcal F$ is totally bounded in $\Lambda^p(X)$ if and only if the following two conditions hold:
\begin{enumerate}[(i)]
\item For every fixed $M>0$, the truncated family
\[
T_M(\mathcal F):=\big\{T_Mf:\,f\in\mathcal F\big\}
\]
is totally bounded in $L^p(X)$.
\smallskip
\item The family $\mathcal F$ is uniformly approximable by truncations, that is,
\begin{equation} \label{eq:approx_trunc}
\lim_{M\to\infty}\sup_{f\in\mathcal F} \|\!\min(|f-T_M f|,1)\|_p=0.
\end{equation}
\end{enumerate}
\end{theorem}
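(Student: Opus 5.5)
The sufficiency direction is the easier one, and we plan to handle it first. Assume (i) and (ii), and let $\eps>0$. By (ii), choose $M$ with $\sup_{f\in\mathcal F}\|\min(|f-T_Mf|,1)\|_p<\eps$. By (i), pick a finite $\eps$-net $g_1,\dots,g_m$ of $T_M(\mathcal F)$ in $L^p(X)$. Since $\min(|h|,1)\le |h|$, any $L^p$-ball of radius $\eps$ sits inside the $\Lambda^p$-ball of the same radius, and each $g_i\in L^p(X)\subseteq\Lambda^p(X)$. Given $f\in\mathcal F$, we take $i$ with $\|T_Mf-g_i\|_p<\eps$. The triangle inequality for the $\F$-norm then gives
\[
\|\min(|f-g_i|,1)\|_p\le \|\min(|f-T_Mf|,1)\|_p+\|T_Mf-g_i\|_p<2\eps.
\]
Hence $\mathcal F$ is totally bounded in $\Lambda^p(X)$.

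For necessity, assume $\mathcal F$ is totally bounded in $\Lambda^p(X)$. To prove (ii), we use a finite net. Fix $\eps\in(0,1)$ and a finite $\eps$-net $h_1,\dots,h_m\in\mathcal F$. Note that $|f-T_Mf|=(|f|-M)_+$, so this quantity is supported on $\{|f|>M\}$ and
\[
\min(|f-T_Mf|,1)\le \chi_{\{|f|>M\}}.
\]
For each fixed $h_j\in\Lambda^p(X)$, the proof of Proposition~\ref{prop:charact} shows $\mu(\{|h_j|>M\})\to0$, and in fact $\|\min((|h_j|-M)_+,1)\|_p\to0$ by dominated convergence. For general $f$ close to $h_j$, we plan to use the pointwise estimate
\[
\min((|f|-M)_+,1)\le \min(|f-h_j|,1)+\min((|h_j|-(M-1))_+,1),
\]
which we will check by cases: if $|f-h_j|\ge1$ it is trivial, and otherwise $|h_j|>|f|-1$. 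This gives
\[
\sup_f\|\min(|f-T_Mf|,1)\|_p\le \eps+\max_j\|\min((|h_j|-M+1)_+,1)\|_p,
\]
and the second term tends to $0$ as $M\to\infty$.

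For (i), fix $M$. The key observation is that $T_M$ is Lipschitz and bounded: $|T_Ma-T_Mb|\le\min(|a-b|,2M)\le \max\{1,2M\}\min(|a-b|,1)$. It follows that $\|T_Mf-T_Mh\|_p\le\max\{1,2M\}\|\min(|f-h|,1)\|_p$. Also, $T_Mh\in L^p(X)$ by Proposition~\ref{prop:bounded-part}, since it is bounded and lies in $\Lambda^p(X)$ because $|T_Mh|\le|h|$. So $T_M$ maps a $\delta$-net of $\mathcal F$ in $\Lambda^p(X)$ to a $\max\{1,2M\}\delta$-net of $T_M(\mathcal F)$ in $L^p(X)$, which gives (i).

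The main obstacle is the pointwise inequality in the proof of (ii), where we need the shift from $M$ to $M-1$ to absorb the difference $f-h_j$. Everything else reduces to the Lipschitz-type bounds above.
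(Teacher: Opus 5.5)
Your proof is correct and follows the paper's argument. Sufficiency uses a finite $L^p$-net of $T_M(\mathcal F)$ plus the triangle inequality. Necessity pushes a finite $\Lambda^p$-net through the bound $|T_Ma-T_Mb|\le\max\{1,2M\}\min(|a-b|,1)$ for (i), and through the vanishing of the truncation error at each net element for (ii).

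The only cosmetic difference is in (ii). The paper uses a three-term triangle inequality together with the $1$-Lipschitz property of $T_M$, which costs $2\|\min(|f-f_i|,1)\|_p$. Your pointwise estimate with the shift $M\mapsto M-1$ gives constant $1$. The shift is unnecessary, since $a\mapsto(|a|-M)_+$ is itself $1$-Lipschitz.
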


\medskip
\subsection*{Truncation estimates}

The first
lemma contains the basic properties of $T_M$ that will be used below.

\begin{lemma} \label{lemma_basic}
Let $M>0$. The following statements hold.
\begin{enumerate}[(i)]
\smallskip
\item If $f\in\Lambda^p(X)$, then $T_Mf\in L^p(X)$.
\smallskip
\item For every $f,g\in\Lambda^p(X)$,
\[
 \|\!\min(|T_M f -T_M g|,1) \|_p \leq \|\!\min(|f -g|,1) \|_p.
\]
\item If $u,v\in L^p(X)$ and $|u|,|v|\leq M$ a.e., then
\[
 \|\!\min(|u -v|,1) \|_p\leq \|u-v\|_p\leq C_M \|\!\min(|u -v|,1) \|_p,
\]
where $C_M=\max\{1,2M\}$.
\end{enumerate}
\end{lemma}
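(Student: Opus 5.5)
The statement to prove is Lemma~\ref{lemma_basic}. All three parts reduce to pointwise inequalities for the scalar map $T_M$, integrated afterwards. The key scalar facts are that $T_M$ is $1$-Lipschitz on $\R$, that $|T_M a|\le M$, and that $|T_M a|\le |a|$.

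For (i), I would use the pointwise bound
\[
|T_M f|\le \min(|f|,M)\le \max\{1,M\}\min(|f|,1).
\]
To check it, split into two cases. If $|f|\le 1$, the left side is at most $|f|$, which equals $\min(|f|,1)$. If $|f|>1$, the left side is at most $M$, while $\min(|f|,1)=1$. Raising to the $p$-th power and integrating, the assumption $f\in\Lambda^p(X)$ gives $T_Mf\in L^p(X)$. Alternatively, one can note that $T_Mf$ is bounded by $M$ and belongs to $\Lambda^p(X)$, and invoke Proposition~\ref{prop:bounded-part}.

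For (ii), I use that $T_M$ is $1$-Lipschitz. This is because $T_M$ is a composition of $a\mapsto\min\{a,M\}$ and $a\mapsto\max\{-M,a\}$, each of which is $1$-Lipschitz. Hence
\[
|T_Mf(x)-T_Mg(x)|\le|f(x)-g(x)|
\]
pointwise. Since $t\mapsto\min(t,1)$ is nondecreasing on $[0,\infty)$, the same inequality holds after applying $\min(\cdot,1)$. Taking $L^p$ norms gives the claim.

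For (iii), set $w=u-v$, so that $|w|\le 2M$ a.e. The left inequality is immediate from $\min(|w|,1)\le|w|$. For the right inequality, I want $|w|\le C_M\min(|w|,1)$ pointwise, and I again split into two cases. Where $|w|\le1$, the minimum equals $|w|$ and $C_M\ge1$. Where $|w|>1$, the minimum equals $1$ and $|w|\le 2M\le C_M$. Integrating gives the bound; this is essentially Proposition~\ref{prop:bounded-part} applied to $w$ with bound $2M$.

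None of the steps is a real obstacle. The only point requiring care is the monotonicity of $\min(\cdot,1)$ in (ii), and remembering that the sup bound for $u-v$ is $2M$ rather than $M$, which is what produces the constant $C_M=\max\{1,2M\}$.
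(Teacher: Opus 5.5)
Your proposal is correct and follows the paper's proof essentially step for step. In (i) you use the pointwise bound $|T_M f|\le \max\{1,M\}\min(|f|,1)$, in (ii) the $1$-Lipschitz property of $T_M$, and in (iii) the bound $|u-v|\le 2M$ to get $\min(|u-v|,1)\le|u-v|\le C_M\min(|u-v|,1)$; your case checks simply make these pointwise inequalities explicit.
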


\begin{proof}
For (i), observe that
\[
 |T_M(a)|\leq \max\{M,1\}\min(|a|,1), \qquad a\in\R.
\]
Thus $|T_Mf|^p\leq \max\{1,M\}^p\min(|f|,1)^p\in L^1(X)$.

\par

For (ii), the scalar truncation map $T_M:\R\to\R$ is $1$-Lipschitz. Hence
\[
\min(|T_M(f)-T_M(g)|,1)\leq \min(|f-g|,1) \qquad \text{a.e.},
\]
and the claim follows.

\par

Finally, if $|u|,|v|\leq M$, then $|u-v|\leq 2M$. Therefore
\[
 \min(|u-v|,1)\leq |u-v|\leq C_M\min(|u-v|,1) \qquad \text{a.e.},
\]
and taking $L^p$-norms gives (iii).
\end{proof}

The next estimate relates truncation errors to the measure of level sets. In particular,
truncations approximate each fixed element of $\Lambda^p(X)$.

\begin{lemma} \label{lem:truncation_conv}
Let $f\in\Lambda^p(X)$. Then, for every $M>0$,
\begin{equation} \label{eq_truncation_measure}
\mu(|f|>M+1)
\leq \|\!\min(| f -T_M f|,1) \|_p^p
\leq \mu(|f|>M).
\end{equation}
Consequently,
\[
\|\!\min(| f -T_M f|,1) \|_p\to0
\qquad \text{as } M\to\infty .
\]
\end{lemma}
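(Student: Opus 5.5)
The plan is a pointwise computation of $f - T_M f$ followed by integration over the appropriate level sets. By the definition~\eqref{eq:truncations}, at any point where $|f(x)|\le M$ we have $T_Mf(x)=f(x)$, so the integrand $\min(|f-T_Mf|,1)^p$ vanishes there. Where $|f(x)|>M$ we have $|f(x)-T_Mf(x)| = |f(x)|-M$. Hence, almost everywhere,
\[
\min(|f-T_Mf|,1)^p = \min(|f|-M,1)^p\,\chi_{\{|f|>M\}}.
\]

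For the upper bound in~\eqref{eq_truncation_measure}, I will bound the integrand by $\chi_{\{|f|>M\}}$, since $\min(\cdot,1)^p\le 1$, and then integrate. For the lower bound, I will note that on $\{|f|>M+1\}$ we have $|f|-M>1$, so the integrand equals $1$ there. Since the integrand is nonnegative, integrating over this subset gives $\mu(|f|>M+1)\le \|\!\min(|f-T_Mf|,1)\|_p^p$.

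For the convergence statement, I will reuse the argument from the proof of Proposition~\ref{prop:charact}, (iii)$\Rightarrow$(i). First, for $M\ge 1$,
\[
\chi_{\{|f|>M\}}\le \chi_{\{|f|>1\}}\le \min(|f|,1)^p\in L^1(X),
\]
so $\mu(|f|>1)<\infty$. Next, the sets $\{|f|>M\}$ decrease as $M$ increases, and their intersection is null because $f$ is real-valued. Continuity of measure from above, which is legitimate because of the finite measure of the starting set, then gives $\mu(|f|>M)\to0$ as $M\to\infty$. Monotonicity in $M$ lets us pass from integer $M$ to real $M$. Combined with the upper bound, this yields the claim.

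The only point requiring care is this finiteness of $\mu(|f|>1)$, which is needed to apply continuity from above. It is supplied directly by $f\in\Lambda^p(X)$; the rest of the argument is routine.
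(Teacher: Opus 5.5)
Your proposal is correct and follows the paper's proof essentially step by step: the same pointwise identity $|f-T_Mf|=(|f|-M)_+$ gives the upper bound in \eqref{eq_truncation_measure}, since the integrand is at most $1$ and supported in $\{|f|>M\}$, and the lower bound, since the integrand equals $1$ on $\{|f|>M+1\}$. The only cosmetic difference is in proving $\mu(|f|>M)\to0$: the paper applies dominated convergence to $\chi_{\{|f|>M\}}\le\min(|f|,1)^p$ for $M\ge1$, whereas you use continuity from above starting from $\mu(|f|>1)<\infty$, which is equivalent.
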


\begin{proof}
Since
\[
 |f-T_Mf|=(|f|-M)_+,
\]
we have
\[
 \|\!\min(| f -T_M f|,1) \|_p^p
 =
 \int_X \min\big((|f|-M)_+,1\big)^p\,\dm.
\]
The integrand is supported in $\{|f|>M\}$ and is bounded by $1$. Hence
\[
 \|\!\min(| f -T_M f|,1) \|_p^p\leq \mu(|f|>M).
\]
On the other hand, if $|f|>M+1$, then $(|f|-M)_+>1$, and therefore
\[
 \min\big((|f|-M)_+,1\big)^p=1.
\]
Thus
\[
 \mu(|f|>M+1)\leq  \|\!\min(| f -T_M f|,1) \|_p^p.
\]

It remains only to observe that $\mu(|f|>M)\to0$ as $M \to \infty$. For $M\ge1$,
\[
 \chi_{\{|f|>M\}}\leq \min(|f|,1)^p\in L^1(X),
\]
and $\chi_{\{|f|>M\}}\to0$ pointwise a.e.\ as $M\to\infty$. By the dominated convergence theorem,
\[
 \mu(|f|>M)\to0 \qquad \text{as } M \to \infty.
\]
The upper bound in \eqref{eq_truncation_measure} therefore gives
\[
 \|\!\min(| f -T_M f|,1) \|_p\to0 \qquad \text{as } M \to \infty,
\]
as claimed.
\end{proof}

The preceding lemma gives the following useful reformulation of the second
condition in Theorem~\ref{thm:main}.

\begin{lemma}\label{lem:almost_equibounded}
Let $\mathcal F\subseteq\Lambda^p(X)$. Then $\mathcal F$ satisfies
\eqref{eq:approx_trunc} if and only if it is almost equibounded, that is,
\begin{equation} \label{eq:almost_equiboundedness}
\lim_{M\to\infty}\sup_{f\in\mathcal F}\mu(|f|>M)=0.
\end{equation}
\end{lemma}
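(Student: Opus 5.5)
The statement to prove is Lemma~\ref{lem:almost_equibounded}: condition \eqref{eq:approx_trunc} holds if and only if $\mathcal F$ is almost equibounded in the sense of \eqref{eq:almost_equiboundedness}. The plan is to apply the two-sided estimate \eqref{eq_truncation_measure} of Lemma~\ref{lem:truncation_conv} to each $f\in\mathcal F$ and then take suprema over $f\in\mathcal F$. This gives, for every $M>0$,
\[
\sup_{f\in\mathcal F}\mu(|f|>M+1)\le \sup_{f\in\mathcal F}\|\!\min(|f-T_Mf|,1)\|_p^p\le \sup_{f\in\mathcal F}\mu(|f|>M).
\]
The remaining work consists of passing to the limit $M\to\infty$ in each of the two inequalities.

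For the implication from almost equiboundedness to \eqref{eq:approx_trunc}, I use the right-hand inequality. If $\sup_{f}\mu(|f|>M)\to0$, then $\sup_f\|\!\min(|f-T_Mf|,1)\|_p^p\to0$. Since $t\mapsto t^{1/p}$ is continuous at $0$, the supremum of the $p$-th roots also tends to $0$, which is exactly \eqref{eq:approx_trunc}.

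For the converse, I use the left-hand inequality with $M$ replaced by $M-1$, for $M>1$:
\[
\sup_f\mu(|f|>M)\le \sup_f\|\!\min(|f-T_{M-1}f|,1)\|_p^p.
\]
As $M\to\infty$ we also have $M-1\to\infty$. Hence the right-hand side tends to $0$ by \eqref{eq:approx_trunc}, and this gives \eqref{eq:almost_equiboundedness}.

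There is no substantial obstacle here. The only points needing care are bookkeeping. First, the supremum of a $p$-th power equals the $p$-th power of the supremum, because the quantities are nonnegative and $t\mapsto t^p$ is increasing. Second, the limits are taken along real $M\to\infty$, and the shift by $1$ does not affect them.
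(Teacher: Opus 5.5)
Your proposal is correct and follows the paper's proof: both apply the two-sided estimate \eqref{eq_truncation_measure} of Lemma~\ref{lem:truncation_conv} to each $f\in\mathcal F$, take suprema over $\mathcal F$, and let $M\to\infty$. Your explicit shift $M\mapsto M-1$ and the remark on taking $p$-th roots just spell out what the paper leaves implicit.
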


\begin{proof}
By Lemma~\ref{lem:truncation_conv}, for every $f\in\mathcal F$ and every
$M>0$,
\[
\mu(|f|>M+1)
\le
\|\!\min(|f-T_Mf|,1)\|_p^p
\le
\mu(|f|>M).
\]
Taking suprema over $f\in\mathcal F$ and letting $M\to\infty$ gives the
equivalence.
\end{proof}

We now proceed with the proof of the compactness criterion.

\subsection*{Proof of Theorem~\ref{thm:main}}
\label{sec:proof}

Assume first that $\mathcal F$ is totally bounded in $\Lambda^p(X)$. Fix $M>0$, let
$\varepsilon>0$, and let $C_M =\max\{1,2M\}$. Choose
$f_1,\ldots,f_N\in\Lambda^p(X)$ such that, for every $f\in\mathcal F$, there is
$i\in\{1,\ldots,N\}$ with
\[
\|\!\min(|f-f_i|,1)\|_p<\frac{\varepsilon}{C_M}.
\]
By Lemma~\ref{lemma_basic}(i), $T_Mf_i\in L^p(X)$ for each $i=1,\ldots, N$. Moreover, using
Lemma~\ref{lemma_basic}(iii) and then Lemma~\ref{lemma_basic}(ii), we obtain
\begin{align*}
\|T_Mf-T_Mf_i\|_p & \leq C_M\|\!\min(|T_Mf-T_Mf_i|,1)\|_p \\
& \leq C_M\|\!\min(|f-f_i|,1)\|_p \\
& <\varepsilon,
\end{align*}
which proves that $T_M(\mathcal F)$ is totally bounded in $L^p(X)$.

Next, we show that
\[
\lim_{M\to\infty}\sup_{f\in\mathcal F}
\|\!\min(|f-T_Mf|,1)\|_p=0.
\]
Let $\varepsilon>0$. Choose $f_1,\ldots,f_N\in\Lambda^p(X)$ such that, for every
$f\in\mathcal F$, there is $i\in\{1,\ldots,N\}$ with
\[
\|\!\min(|f-f_i|,1)\|_p<\frac{\varepsilon}{3}.
\]
By Lemma~\ref{lem:truncation_conv}, for each $i=1,\ldots,N$ there exists
$M_i>0$ such that
\[
\|\!\min(|f_i-T_Mf_i|,1)\|_p<\frac{\varepsilon}{3}
\]
for every $M\geq M_i$. Setting
\[
M_0:=\max\{M_1,\ldots,M_N\},
\]
we obtain
\[
\|\!\min(|f_i-T_Mf_i|,1)\|_p<\frac{\varepsilon}{3}
\]
for every $M\geq M_0$ and every $i=1,\ldots,N$. Hence, for $M\geq M_0$ and
$f\in\mathcal F$, choosing $i$ as above gives
\begin{align*}
\|\!\min(|f-T_Mf|,1)\|_p
&\leq
\|\!\min(|f-f_i|,1)\|_p
+\|\!\min(|f_i-T_Mf_i|,1)\|_p  \\
&\qquad
+\|\!\min(|T_Mf_i-T_Mf|,1)\|_p  \\
&\leq
2\|\!\min(|f-f_i|,1)\|_p
+\|\!\min(|f_i-T_Mf_i|,1)\|_p  \\
&<\varepsilon,
\end{align*}
where Lemma~\ref{lemma_basic}(ii) was used in the second inequality. Therefore
\[
\sup_{f\in\mathcal F}\|\!\min(|f-T_Mf|,1)\|_p<\varepsilon
\]
for every $M\geq M_0$, as required.
\par

Conversely, assume that
\[
\lim_{M\to\infty}\sup_{f\in\mathcal F}
\|\!\min(|f-T_Mf|,1)\|_p=0
\]
and that $T_M(\mathcal F)$ is totally bounded in $L^p(X)$ for every $M>0$.
Let $\varepsilon>0$ be given. Choose $M>0$ such that
\[
\sup_{f\in\mathcal F}\|\!\min(|f-T_Mf|,1)\|_p<\frac{\varepsilon}{2}.
\]
Since $T_M(\mathcal F)$ is totally bounded in $L^p(X)$, there exist
$h_1,\ldots,h_N\in L^p(X)$ such that for every $f\in\mathcal F$ there is
$i\in\{1,\ldots,N\}$ with
\[
\|T_Mf-h_i\|_p<\frac{\varepsilon}{2}.
\]
Then
\[
\|\!\min(|T_Mf-h_i|,1)\|_p\leq \|T_Mf-h_i\|_p<\frac{\varepsilon}{2}.
\]
Thus
\begin{align*}
\|\!\min(|f-h_i|,1)\|_p
&\leq \|\!\min(|f-T_Mf|,1)\|_p
   + \|\!\min(|T_Mf-h_i|,1)\|_p <\varepsilon,
\end{align*}
so $\mathcal F$ is totally bounded in $\Lambda^p(X)$.

\qed

\section{Kolmogorov--Riesz compactness} \label{sec:kolmogorov}

We now show that Theorem~\ref{thm:main}, combined with the classical Kolmogorov--Riesz theorem in $L^p(\R^n)$, gives a short proof of the Euclidean compactness criterion for $\Lambda^p(\R^n)$ obtained in~\cite{alves2026kolmogorov} under the original definition of these spaces from~\cite{alves2025F}.

We start by recalling the classical Kolmogorov--Riesz theorem in $L^p(\R^n)$
in the form stated in~\cite{hanche2019improvement}. There, the usual boundedness assumption on the family has been shown to be a consequence of the tail and translation conditions; see also~\cite{hanche2010kolmogorov}.

\begin{theorem}[Kolmogorov--Riesz compactness in $L^p(\R^n)$]
\label{thm:KR-L}
Let $1\le p<\infty$, and let $\mathcal F\subseteq L^p(\R^n)$. Then
$\mathcal F$ is totally bounded in $L^p(\R^n)$ if and only if the
following two conditions hold:
\begin{enumerate}[(i)]
\item $\mathcal F$ satisfies the tail condition
\[
\lim_{R\to\infty}
\sup_{f\in\mathcal F}
\int_{\{|x|>R\}}|f(x)|^p\,\dx=0.
\]

\item $\mathcal F$ satisfies the translation condition
\[
\lim_{|y|\to0}
\sup_{f\in\mathcal F}
\int_{\R^n}|f(x+y)-f(x)|^p\,\dx=0.
\]
\end{enumerate}
\end{theorem}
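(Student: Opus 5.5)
The statement immediately above is the classical Kolmogorov--Riesz theorem in $L^p(\R^n)$, quoted from~\cite{hanche2019improvement}. I would prove it by the standard mollification-and-localization argument, treating the two directions separately.

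For necessity, assume $\mathcal F$ is totally bounded in $L^p(\R^n)$ and fix $\eps>0$. Pick a finite $\eps$-net $f_1,\dots,f_N$. For each single $f_i$, dominated convergence gives $\int_{\{|x|>R\}}|f_i|^p\,\dx\to0$ as $R\to\infty$. Continuity of translation in $L^p$, proved by density of $C_c(\R^n)$, gives $\|f_i(\cdot+y)-f_i\|_p\to0$ as $|y|\to0$. Taking the maximum over the finitely many $i$ and using the triangle inequality transfers both conditions to all of $\mathcal F$, at the cost of $2\eps$ or $3\eps$.

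For sufficiency, fix $\eps>0$ and choose $R$ from the tail condition, so that $\|f\chi_{\{|x|>R\}}\|_p<\eps$ for all $f\in\mathcal F$.

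\textbf{Mollification.} Let $\varphi_r=r^{-n}\varphi(\cdot/r)$ be a standard mollifier supported in the ball of radius $r$. Minkowski's integral inequality gives
\[
\|f*\varphi_r-f\|_p\le\sup_{|y|\le r}\|f(\cdot-y)-f\|_p .
\]
By the translation condition, this is $<\eps$ uniformly in $f\in\mathcal F$ once $r$ is small enough.

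\textbf{Arzel\`a--Ascoli.} Consider the family $\{(f*\varphi_r)|_{\overline{B_R}}: f\in\mathcal F\}$ in $C(\overline{B_R})$. By H\"older,
\[
|f*\varphi_r(x)|\le\|f\|_p\|\varphi_r\|_{p'},
\qquad
|f*\varphi_r(x)-f*\varphi_r(x')|\le\|f(\cdot+x'-x)-f\|_p\,\|\varphi_r\|_{p'},
\]
where $p'$ is the conjugate exponent. So the family is uniformly bounded and equicontinuous, and Arzel\`a--Ascoli makes it totally bounded in sup norm. Since $\overline{B_R}$ has finite measure, sup-norm total boundedness implies total boundedness in $L^p(B_R)$.

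\textbf{Main obstacle: uniform boundedness of $\mathcal F$ in $L^p$.} The first estimate above needs $\sup_{f\in\mathcal F}\|f\|_p<\infty$, which is not among the hypotheses. I would try to derive it as follows.
\begin{itemize}
\item Fix $r$ from the mollification step. Then $\|f*\varphi_r-f\|_p<\eps$ for all $f$, so it suffices to bound $\|f*\varphi_r\|_p$ uniformly.
\item Iterate the translation condition: write $f(\cdot+y)-f$ as a telescoping sum of $k$ small translates, which shows that translates by any bounded $y$ stay uniformly close to $f$.
\item Combine this with the tail condition. If $\|f\|_p$ were huge, almost all of its mass would lie in $B_R$. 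Translating $f$ by a vector of length $3R$ moves that mass outside $B_{2R}$, and the difference would then be of size about $2^{1/p}\|f\|_p$. This contradicts the bound from iterated translations, giving a uniform bound on $\|f\|_p$.
\end{itemize}
This is the key point of~\cite{hanche2019improvement}, and I expect it to be the delicate step. The quantitative bookkeeping is that the telescoped error grows linearly in $k$ while the mass split is a fixed fraction.

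\textbf{Assembly.} Every $f\in\mathcal F$ lies within $3\eps$ in $L^p(\R^n)$ of $\chi_{B_R}(f*\varphi_r)$:
\begin{itemize}
\item $\|f\chi_{\{|x|>R\}}\|_p<\eps$ by the choice of $R$;
\item $\|\chi_{B_R}(f*\varphi_r-f)\|_p<\eps$ by the choice of $r$.
\end{itemize}
The family $\{\chi_{B_R}(f*\varphi_r)\}$ is totally bounded in $L^p(\R^n)$ by the Arzel\`a--Ascoli step. Hence $\mathcal F$ has a finite $4\eps$-net, and since $\eps$ was arbitrary, $\mathcal F$ is totally bounded.
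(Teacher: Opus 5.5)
Your proposal is correct. The paper gives no proof of this statement. It quotes it from Hanche-Olsen, Holden and Malinnikova (building on Hanche-Olsen and Holden) and uses it only as a black box in Section~\ref{sec:kolmogorov}, so the comparison is with those sources.

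Your necessity argument is the standard one. Your derivation of $\sup_{f\in\mathcal F}\|f\|_p<\infty$ is the key observation of the 2019 paper, and it closes as you predict. Take $\sup_f\|f\chi_{\{|x|>R_0\}}\|_p<1$, $\sup_f\|\tau_zf-f\|_p<1$ for $|z|<\rho$, $|y|=3R_0$, and an integer $k>3R_0/\rho$. Telescoping gives $\|\tau_yf-f\|_p<k$. Since $B_{R_0}+y\subseteq\{|x|>R_0\}$, one gets $\|f\chi_{B_{R_0}}\|_p\le\|\tau_yf-f\|_p+\|f\chi_{B_{R_0}+y}\|_p<k+1$, hence $\|f\|_p<k+2$. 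Your first bullet, reducing to a bound on $\|f*\varphi_r\|_p$, is not needed, since your Arzel\`a--Ascoli estimate uses $\|f\|_p$ directly.

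Where you genuinely differ is the sufficiency step. You mollify and apply Arzel\`a--Ascoli on $\overline{B_R}$, which needs the H\"older bounds with $\|\varphi_r\|_{p'}$ for uniform boundedness and equicontinuity. As I recall it, the cited Hanche-Olsen--Holden argument instead replaces $f$ on $B_R$ by its averages over finitely many disjoint cubes of diameter less than $\rho$ whose closures cover $B_R$. Jensen's inequality controls the $L^p(B_R)$ error of this finite-rank averaging operator $P$ by $\sup_{|y|<\rho}\|\tau_yf-f\|_p$. Then $P(\mathcal F)$ is a bounded set in a finite-dimensional space, so Heine--Borel, via a general total-boundedness lemma for metric spaces, replaces Arzel\`a--Ascoli.

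Their route is more elementary and adapts to settings where smooth mollifiers are unavailable. Yours is the familiar textbook argument and produces smooth approximants. In both, the uniform $L^p$ bound enters at the same point, to make the approximating family bounded.
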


Combining Theorem~\ref{thm:main} and Theorem~\ref{thm:KR-L}, we recover the
Kolmogorov--Riesz compactness theorem in $\Lambda^p(\R^n)$.

\begin{theorem}[Kolmogorov--Riesz compactness in $\Lambda^p(\R^n)$]
\label{thm:KR-Lambda}
Let $1\le p<\infty$, and let $\mathcal F\subseteq\Lambda^p(\R^n)$. Then
$\mathcal F$ is totally bounded in $\Lambda^p(\R^n)$ if and only if the
following three conditions hold:
\begin{enumerate}[(i)]
\item $\mathcal F$ satisfies the truncated tail condition
\[
\lim_{R\to\infty}
\sup_{f\in\mathcal F}
\int_{\{|x|>R\}}\min(|f(x)|,1)^p\,\dx=0.
\]

\item $\mathcal F$ satisfies the truncated translation condition
\[
\lim_{|y|\to0}
\sup_{f\in\mathcal F}
\int_{\R^n}\min(|f(x+y)-f(x)|,1)^p\,\dx=0.
\]

\item $\mathcal F$ is almost equibounded, that is,
\[
\lim_{M\to\infty}
\sup_{f\in\mathcal F}
\big|\{|f|>M\}\big|=0.
\]
\end{enumerate}
\end{theorem}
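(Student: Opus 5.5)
The plan is to combine Theorem~\ref{thm:main} with Lemma~\ref{lem:almost_equibounded}, and to apply Theorem~\ref{thm:KR-L} to the truncated families $T_M(\mathcal F)$ for each fixed $M>0$. By Lemma~\ref{lem:almost_equibounded}, condition (iii) is exactly condition (ii) of Theorem~\ref{thm:main}. So the whole task is to show the following: under (iii), conditions (i)--(ii) of the present theorem are equivalent to total boundedness of every $T_M(\mathcal F)$ in $L^p(\R^n)$. By Theorem~\ref{thm:KR-L}, that total boundedness is the tail and translation conditions for $T_M(\mathcal F)$. Throughout, write $\tau_y f(x)=f(x+y)$, and note that $T_M(\tau_y f)=\tau_y(T_M f)$.

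\emph{Sufficiency.} Assume (i)--(iii) and fix $M>0$.
\begin{itemize}
\item \emph{Tail.} The pointwise bound $|T_M f|\le \max\{M,1\}\min(|f|,1)$ from the proof of Lemma~\ref{lemma_basic}(i) shows that the $L^p$ tail of $T_Mf$ outside $\{|x|>R\}$ is bounded by $\max\{M,1\}^p$ times the truncated tail. Hence (i) gives the tail condition for $T_M(\mathcal F)$.
\item \emph{Translation.} Since $T_M$ is $1$-Lipschitz and $|T_M f(x+y)-T_M f(x)|\le 2M$, we have
\[
|T_Mf(x+y)-T_Mf(x)|\le C_M\min(|f(x+y)-f(x)|,1),\qquad C_M=\max\{1,2M\}.
\]
This is the pointwise version of Lemma~\ref{lemma_basic}(ii)--(iii). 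Integrating, (ii) yields the translation condition for $T_M(\mathcal F)$.
\end{itemize}
Theorem~\ref{thm:KR-L} then makes each $T_M(\mathcal F)$ totally bounded in $L^p$. Together with (iii) and Lemma~\ref{lem:almost_equibounded}, Theorem~\ref{thm:main} gives total boundedness of $\mathcal F$ in $\Lambda^p(\R^n)$.

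\emph{Necessity.} Assume $\mathcal F$ is totally bounded. Then Theorem~\ref{thm:main} and Lemma~\ref{lem:almost_equibounded} give (iii), and they also give total boundedness of every $T_M(\mathcal F)$ in $L^p$.
\begin{itemize}
\item \emph{Truncated tail.} Observe that $\min(|f|,1)=|T_1 f|$. Condition (i) is then literally the tail condition of Theorem~\ref{thm:KR-L} for the totally bounded family $T_1(\mathcal F)$.
\item \emph{Truncated translation.} This is the step needing a small argument, since $\min(|\cdot|,1)$ is not controlled by a single truncation. Set $g=\tau_y f$. We split
\[
\min(|g-f|,1)\le \min(|g-T_Mg|,1)+|T_Mg-T_Mf|+\min(|T_Mf-f|,1).
\]
We take $\|\cdot\|_p$ and use translation invariance of Lebesgue measure, which gives $\|\min(|g-T_Mg|,1)\|_p=\|\min(|f-T_Mf|,1)\|_p$. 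This yields
\[
\|\min(|\tau_yf-f|,1)\|_p\le 2\sup_{h\in\mathcal F}\|\min(|h-T_Mh|,1)\|_p+\|\tau_y T_Mf-T_Mf\|_p.
\]
Given $\varepsilon>0$, we first choose $M$ via condition (ii) of Theorem~\ref{thm:main} so that the first term is below $\varepsilon/2$. We then choose $|y|$ small using the translation condition of Theorem~\ref{thm:KR-L} for the totally bounded family $T_M(\mathcal F)$.
\end{itemize}
This gives (ii). I expect this translation estimate to be the only genuinely non-mechanical point, and the splitting above should handle it.
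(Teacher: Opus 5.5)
Your proof is correct. The sufficiency half matches the paper's (Lemma~\ref{lem:KR-conditions-imply-truncations}, with the same two pointwise bounds), but the necessity half takes a different route.

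The paper (Lemma~\ref{lem:truncations-imply-KR-conditions}) uses condition (iii) in its measure form. It fixes $M$ with $\sup_{f\in\mathcal F}|\{|f|>M\}|<\eps/4$ and splits $\R^n$ into two parts:
\begin{itemize}
\item a region where $T_M$ acts as the identity, namely $\{|f|\le M\}$ for the tail and the complement of $A_{f,y}=\{|f|>M\}\cup\{|\tau_yf|>M\}$ for translations; there $\min(|f|,1)^p\le|T_Mf|^p$ and $\min(|\tau_yf-f|,1)^p\le|\tau_yT_Mf-T_Mf|^p$ respectively;
\item an exceptional set, on which the integrand is bounded by $1$ and whose measure is at most $2\sup_{f}|\{|f|>M\}|$.
\end{itemize}

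You instead work entirely with the $\F$-norm. The identity $\min(|f|,1)=|T_1f|$ makes (i) literally the $L^p$ tail condition for the single family $T_1(\mathcal F)$, so the truncated tail condition needs no almost equiboundedness at all. For translations, subadditivity and monotonicity of $t\mapsto\min(t,1)$ justify your pointwise three-term inequality. Together with the commutation $T_M\tau_y=\tau_yT_M$ and translation invariance, this reduces everything to \eqref{eq:approx_trunc} plus the $L^p$ translation condition for $T_M(\mathcal F)$. This is the same three-term splitting used in the necessity part of the proof of Theorem~\ref{thm:main}, with $\tau_yf$ playing the role of the net point $f_i$.

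Your version is slightly more economical, and it shows that the truncated tail condition involves only the level $M=1$. The paper's version makes the role of the level sets in (iii) explicit.

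A small slip: in the sufficiency tail bullet you mean the $L^p$ tail of $T_Mf$ \emph{on} $\{|x|>R\}$, not outside it.
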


By Theorem~\ref{thm:main} and Lemma~\ref{lem:almost_equibounded}, total
boundedness in $\Lambda^p(\R^n)$ is equivalent to almost equiboundedness
together with total boundedness in $L^p(\R^n)$ of every truncated family
$T_M(\mathcal F)$. Thus, to prove Theorem~\ref{thm:KR-Lambda}, it remains to
relate the truncated tail and translation conditions to the total
boundedness in $L_p$ of these truncated families. This is done in the next two lemmas.

\begin{lemma}\label{lem:KR-conditions-imply-truncations}
Let $\mathcal F\subseteq\Lambda^p(\R^n)$ satisfy the truncated tail condition
and the truncated translation condition in Theorem~\ref{thm:KR-Lambda}. Then,
for every $M>0$, the truncated family $T_M(\mathcal F)$ is totally bounded in
$L^p(\R^n)$.
\end{lemma}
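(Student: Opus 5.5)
Fix $M>0$. The plan is to verify the two hypotheses of Theorem~\ref{thm:KR-L} for the family $T_M(\mathcal F)$, which lies in $L^p(\R^n)$ by Lemma~\ref{lemma_basic}(i). Both verifications rest on one pointwise comparison. Every $T_Mf$ is bounded by $M$, and the scalar map $T_M$ is $1$-Lipschitz. So whenever $|u|,|v|\le M$, the estimate behind Lemma~\ref{lemma_basic}(iii) gives $|u-v|\le C_M\min(|u-v|,1)$ with $C_M=\max\{1,2M\}$. In particular we get
\[
|T_M a|\le \max\{1,M\}\min(|a|,1), \qquad |T_Ma-T_Mb|\le C_M\min(|T_Ma-T_Mb|,1)\le C_M\min(|a-b|,1)
\]
for all $a,b\in\R$. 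The first inequality appears in the proof of Lemma~\ref{lemma_basic}(i). The second uses that $|T_Ma-T_Mb|\le 2M$, together with monotonicity of $t\mapsto\min(t,1)$ and the Lipschitz property.

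For the tail condition (i) of Theorem~\ref{thm:KR-L}, we integrate the $p$-th power of the first inequality over $\{|x|>R\}$. This gives
\[
\sup_{f\in\mathcal F}\int_{\{|x|>R\}}|T_Mf(x)|^p\,\dx\le \max\{1,M\}^p\sup_{f\in\mathcal F}\int_{\{|x|>R\}}\min(|f(x)|,1)^p\,\dx ,
\]
and the right-hand side tends to $0$ as $R\to\infty$ by the truncated tail condition.

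For the translation condition (ii), note that $T_M$ acts pointwise, so $(T_Mf)(x+y)=T_M(f(x+y))$. Applying the second inequality with $a=f(x+y)$ and $b=f(x)$ and integrating gives
\[
\sup_{f\in\mathcal F}\int_{\R^n}|T_Mf(x+y)-T_Mf(x)|^p\,\dx\le C_M^p\sup_{f\in\mathcal F}\int_{\R^n}\min(|f(x+y)-f(x)|,1)^p\,\dx ,
\]
which tends to $0$ as $|y|\to0$ by the truncated translation condition. Theorem~\ref{thm:KR-L}, in the form that needs no separate boundedness assumption, then yields total boundedness of $T_M(\mathcal F)$ in $L^p(\R^n)$.

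There is no real obstacle here. The only point needing care is the translation estimate, where we must use boundedness of the truncations to pass from $\min(\cdot,1)$ back to the full difference at the cost of the constant $C_M$. This step is exactly why truncation is needed, and it is also why almost equiboundedness plays no role in this lemma.
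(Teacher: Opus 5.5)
Your proposal is correct and follows essentially the same route as the paper. You verify the tail and translation hypotheses of Theorem~\ref{thm:KR-L} for $T_M(\mathcal F)$ using the pointwise bounds $|T_M a|\le\max\{1,M\}\min(|a|,1)$ and $|T_Ma-T_Mb|\le\max\{1,2M\}\min(|a-b|,1)$, and the paper uses exactly these bounds, written as $p$-th powers.
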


\begin{proof}
Fix $M>0$. By Lemma~\ref{lemma_basic}\textup{(i)}, we have
$T_Mf\in L^p(\R^n)$ for every $f\in\mathcal F$. We shall verify the tail and translation hypotheses of Theorem~\ref{thm:KR-L}.

We start with the tail condition. Using the inequality
\[
|T_M(a)|^p\le \max\{M^p,1\}\min(|a|,1)^p,
\qquad a\in\R,
\]
we deduce
\[
\int_{\{|x|>R\}} |T_Mf(x)|^p\,\dx
\le
\max\{M^p,1\}
\int_{\{|x|>R\}}\min(|f(x)|,1)^p\,\dx.
\]
Taking the supremum over $f\in\mathcal F$ and then letting $R\to\infty$, the
right-hand side tends to zero by the truncated tail condition.

Next, since $T_M$ is $1$-Lipschitz and
$|T_M(a)-T_M(b)|\le 2M$, we have
\[
|T_M(a)-T_M(b)|^p
\le
\max\{(2M)^p,1\}\min(|a-b|,1)^p,
\qquad a,b\in\R.
\]
Thus, for every $y\in\R^n$ and every $f\in\mathcal F$,
\begin{align*}
\int_{\R^n}  |& T_Mf(x+y)-T_Mf(x)|^p\,\dx \\
& \leq \max\{(2M)^p,1\}
\int_{\R^n}\min(|f(x+y)-f(x)|,1)^p\,\dx.
\end{align*}
Taking the supremum over $f\in\mathcal F$ and then letting $|y|\to0$, the
right-hand side tends to zero by the truncated translation condition.

Therefore, by Theorem~\ref{thm:KR-L},
$T_M(\mathcal F)$ is totally bounded in $L^p(\R^n)$.
\end{proof}

The converse implication uses almost equiboundedness to pass from total
boundedness of the truncated families back to the truncated tail and translation conditions.

\begin{lemma}\label{lem:truncations-imply-KR-conditions}
Let $\mathcal F\subseteq\Lambda^p(\R^n)$ be almost equibounded, and assume that
$T_M(\mathcal F)$ is totally bounded in $L^p(\R^n)$ for every $M>0$. Then
$\mathcal F$ satisfies the truncated tail condition and the truncated
translation condition in Theorem~\ref{thm:KR-Lambda}.
\end{lemma}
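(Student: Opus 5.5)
The plan is to split each integrand according to whether $|f|$ (and $|f(\cdot+y)|$) exceeds a truncation level $M$. On the set where these stay below $M$, $f$ coincides with $T_Mf$, so the $L^p$ Kolmogorov--Riesz conditions for $T_M(\mathcal F)$ apply. The set where they exceed $M$ has small measure uniformly in $f$ by almost equiboundedness. Since the integrands $\min(\cdot,1)^p$ are bounded by $1$, this exceptional set contributes at most its measure.

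\emph{Tail condition.} Let $\varepsilon>0$ and choose $M\ge1$ with $\sup_{f\in\mathcal F}|\{|f|>M\}|<\varepsilon$. Apply the necessity direction of Theorem~\ref{thm:KR-L} to the totally bounded family $T_M(\mathcal F)\subseteq L^p(\R^n)$. This gives $R_0$ such that $\sup_f\int_{\{|x|>R\}}|T_Mf|^p\,\dx<\varepsilon$ for $R\ge R_0$. We use the pointwise inequality
\[
\min(|f|,1)^p\le \min(|T_Mf|,1)^p+\chi_{\{|f|>M\}}\le |T_Mf|^p+\chi_{\{|f|>M\}}.
\]
The first step holds because $T_Mf=f$ off $\{|f|>M\}$. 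Integrating over $\{|x|>R\}$ gives a bound of $2\varepsilon$ uniformly in $f\in\mathcal F$, for all $R\ge R_0$.

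\emph{Translation condition.} With the same $M$, let
\[
E_y=\{|f|>M\}\cup\big(\{|f|>M\}-y\big).
\]
Its measure is below $2\varepsilon$ by translation invariance of Lebesgue measure. Off $E_y$ we have $f(x)=T_Mf(x)$ and $f(x+y)=T_Mf(x+y)$. Hence
\[
\min(|f(x+y)-f(x)|,1)^p\le |T_Mf(x+y)-T_Mf(x)|^p+\chi_{E_y}(x).
\]
Integrating and using the translation condition in Theorem~\ref{thm:KR-L} for $T_M(\mathcal F)$, we get a bound of at most $3\varepsilon$ for $|y|$ small, uniformly in $f$.

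The main obstacle is only bookkeeping: the sets $\{|f|>M\}$ depend on $f$, so we need the uniformity supplied by almost equiboundedness, and the translated copy of the exceptional set must be handled as well. Both points are covered by the choice of $M$ and by translation invariance. The only external input is the ``only if'' direction of Theorem~\ref{thm:KR-L}, which is part of that theorem as stated.
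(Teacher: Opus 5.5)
Your proposal is correct and follows essentially the same route as the paper: choose $M$ by almost equiboundedness, apply the necessity direction of Theorem~\ref{thm:KR-L} to $T_M(\mathcal F)$, and bound the contribution of the exceptional set $\{|f|>M\}$ (and, for the translation condition, of its translate, which has the same measure) by its Lebesgue measure, since the truncated integrands are at most $1$. The differences are cosmetic: you phrase the splitting as pointwise inequalities rather than as a split of the integral, and you end with the bounds $2\varepsilon$ and $3\varepsilon$ instead of normalizing the constants to $\varepsilon$.
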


\begin{proof}
Let $\varepsilon>0$. By almost
equiboundedness, we may choose $M>0$ such that
\[
\sup_{f\in\mathcal F}\big|\{|f|>M\}\big|<\frac{\varepsilon}{4}.
\]
Moreover, since $T_M(\mathcal F)$ is totally bounded in $L^p(\R^n)$, by Theorem~\ref{thm:KR-L} there exist $R > 0$ and $r > 0$ such that 
\[
\sup_{f\in\mathcal F}
\int_{|x|>R} |T_Mf(x)|^p\,\dx < \frac{\eps}{2},
\]
and
\[\sup_{f\in\mathcal F}
\int_{\R^n}|T_Mf(x+y)-T_Mf(x)|^p\,\dx < \frac{\eps}{2}, \]
whenever $|y| < r$.

We first prove the truncated tail condition. For every $f\in\mathcal F$ we have:
\begin{align*}
\int_{|x|>R} \min(|f(x)|,1)^p \, \dx & = \int_{\{|x| > R \} \cap \{|f| \leq M \}} \min(|f(x)|,1)^p \, \dx \\
& \quad + \int_{\{|x| > R \} \cap \{|f| > M \}} \min(|f(x)|,1)^p \, \dx \\
& \leq \int_{|x|>R} |T_M(f)|^p \, \dx + \big| \{|f| > M \} \big| \\ 
& < \eps.
\end{align*}

We now prove the truncated translation condition. Fix $f\in\mathcal F$ and $y\in\R^n$ with $|y| < r$. Set
\[
A_{f,y}:=\big\{ |f|>M \big\}
\cup
\big\{|\tau_y f|>M \big\},
\]
where $\tau_y f(x) = f(x+y)$, and note that
\[
|A_{f,y}|
\le
2\big|\{|f|>M\}\big|.
\]
If $x \in \R^n\setminus A_{f,y}$, then both $|f(x)|$ and $|f(x+y)|$ are at most $M$, and
therefore
\[
T_Mf(x)=f(x),
\qquad
T_Mf(x+y)=f(x+y).
\]
Thus
\begin{align*}
\int_{\R^n}\min(|f(x+y)-f(x)|,1)^p\,\dx & = \int_{\R^n \setminus A_{f,y}}\min(|f(x+y)-f(x)|,1)^p\,\dx \\
& \quad + \int_{ A_{f,y}}\min(|f(x+y)-f(x)|,1)^p\,\dx \\
& \le
\int_{\R^n}|T_Mf(x+y)-T_Mf(x)|^p\,\dx
+
|A_{f,y}| \\
& \le
\int_{\R^n}|T_Mf(x+y)-T_Mf(x)|^p\,\dx
+
2 \big| \{|f| > M \} \big| \\
& < \eps.
\end{align*}
Since $f$ is arbitrary, the result follows.
\end{proof}

\begin{proof}[Proof of Theorem~\ref{thm:KR-Lambda}]
Assume first that $\mathcal F$ is totally bounded in $\Lambda^p(\R^n)$. By
Theorem~\ref{thm:main}, the truncated family $T_M(\mathcal F)$ is totally
bounded in $L^p(\R^n)$ for every $M>0$, and $\mathcal F$ satisfies
\eqref{eq:approx_trunc}. Hence, by Lemma~\ref{lem:almost_equibounded},
$\mathcal F$ is almost equibounded. Lemma~\ref{lem:truncations-imply-KR-conditions}
then gives the truncated tail and translation conditions.

Conversely, assume that $\mathcal F$ satisfies the three conditions in
Theorem~\ref{thm:KR-Lambda}. By Lemma~\ref{lem:KR-conditions-imply-truncations},
$T_M(\mathcal F)$ is totally bounded in $L^p(\R^n)$ for every $M>0$. Moreover,
condition \textup{(iii)} and Lemma~\ref{lem:almost_equibounded} imply
\eqref{eq:approx_trunc}. Hence the two conditions of Theorem~\ref{thm:main}
hold, and therefore $\mathcal F$ is totally bounded in $\Lambda^p(\R^n)$.
\end{proof}

\section{Vitali convergence} \label{sec:vitali}

We conclude this note with a simple proof of the Vitali convergence theorem in $\Lambda^p(X)$, obtained in~\cite{alves2025F} using the almost-$L_p$ definition of the spaces. The proof below uses the truncation definition of~$\Lambda^p(X)$ together with the classical Vitali convergence theorem in $L^p(X)$, which we now recall; see~\cite{bartle1995elements}.

\begin{theorem}[Vitali convergence in $L^p(X)$] \label{thm:vitali_L}
Let $(X,\Sigma,\mu)$ be a measure space, let $1\le p<\infty$, let
$\{f_k\}_{k \in \N}\subseteq L^p(X)$, and let $f$ be measurable. Then
$\{f_k\}_{k\in\N}$ converges to $f$ in $L^p(X)$ if and only if the following two conditions hold:
\begin{enumerate}[(i)]
\item $\{f_k\}_{k\in\N}$ converges to $f$ in measure,
\smallskip
\item for every $\varepsilon > 0$ there exist $E_\varepsilon \in \Sigma$ with $\mu(E_\varepsilon) < \infty$ and $\delta_\varepsilon > 0$ such that 
\[\sup_{k \in \N} \int_{E_\varepsilon^c} |f_k |^p \, \mathrm{d}\mu < \varepsilon^p\]
and, if $F \in \Sigma$ and $\mu(F) < \delta_\varepsilon$, then
\[\sup_{k \in \N} \int_{E_\varepsilon \cap F} |f_k |^p \, \mathrm{d}\mu < \varepsilon^p \, .\]
\end{enumerate}
\end{theorem}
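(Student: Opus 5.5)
We prove the two implications separately. Throughout, Minkowski's inequality in $L^p(X)$ is used on restrictions to measurable sets.

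\emph{Necessity.} Assume $\|f_k-f\|_p\to0$; in particular $f\in L^p(X)$.

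Convergence in measure follows from Chebyshev's inequality:
\[
\alpha^p\mu(\{|f_k-f|>\alpha\})\le\|f_k-f\|_p^p \qquad \text{for every } \alpha>0.
\]

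For condition (ii), fix $\varepsilon>0$ and choose $K$ such that $\|f_k-f\|_p<\varepsilon/2$ for $k>K$. Then treat the finite family $g\in\{f,f_1,\ldots,f_K\}$ one function at a time.
\begin{itemize}
\item For each such $g$, the set $E_g=\{|g|>\eta_g\}$ has finite measure. By dominated convergence, $\int_{E_g^c}|g|^p\,\mathrm{d}\mu<(\varepsilon/2)^p$ once $\eta_g$ is small enough.
\item By absolute continuity of $A\mapsto\int_A|g|^p\,\mathrm{d}\mu$, there is $\delta_g>0$ such that $\mu(F)<\delta_g$ implies $\int_F|g|^p\,\mathrm{d}\mu<(\varepsilon/2)^p$.
\end{itemize}
Let $E_\varepsilon$ be the union of the sets $E_g$; it has finite measure. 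Let $\delta_\varepsilon$ be the minimum of the $\delta_g$. Enlarging $E_g$ only shrinks its complement, so the tail bounds still hold on $E_\varepsilon^c$.

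For $k\le K$ both bounds in (ii) hold directly. For $k>K$ we use Minkowski:
\[
\|f_k\chi_{E_\varepsilon^c}\|_p\le\|f_k-f\|_p+\|f\chi_{E_\varepsilon^c}\|_p<\varepsilon,
\]
and the same estimate holds with $E_\varepsilon^c$ replaced by $E_\varepsilon\cap F$ whenever $\mu(F)<\delta_\varepsilon$.

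\emph{Sufficiency.} Assume (i) and (ii).

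The first step is to show that $f\in L^p(X)$ and that $f$ inherits the bounds in (ii). By (i), some subsequence $f_{k_j}$ converges to $f$ almost everywhere. Fatou's lemma along this subsequence gives
\[
\int_{E_\varepsilon^c}|f|^p\,\mathrm{d}\mu\le\varepsilon^p
\qquad\text{and}\qquad
\int_{E_\varepsilon\cap F}|f|^p\,\mathrm{d}\mu\le\varepsilon^p \quad\text{whenever } \mu(F)<\delta_\varepsilon.
\]
To see that $\int_{E_\varepsilon}|f|^p\,\mathrm{d}\mu<\infty$, note that $\sup_k\int_{E_\varepsilon}|f_k|^p\,\mathrm{d}\mu<\infty$. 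To check this:
\begin{itemize}
\item For each $k$, the set $\{|f_k|>t\}$ has measure at most $\|f_k\|_p^p/t^p$.
\item Cover most of $E_\varepsilon$ by $\{|f_k|\le t\}$, which contributes at most $t^p\mu(E_\varepsilon)$.
\item The remaining small set is controlled by the $\delta_\varepsilon$-condition.
\end{itemize}
The subtlety is that $t$ must be chosen uniformly in $k$. We avoid this by an alternative route. On $E_\varepsilon$, the set $\{|f|>t\}$ has small measure for $t$ large, because $f$ is finite a.e. and $\mu(E_\varepsilon)<\infty$. Fatou's lemma on $E_\varepsilon\cap\{|f|>t\}$, combined with the bound $\int_{E_\varepsilon\cap\{|f|\le t\}}|f|^p\,\mathrm{d}\mu\le t^p\mu(E_\varepsilon)$, then gives finiteness directly.

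With $f\in L^p(X)$ established, fix $\varepsilon>0$ and $\eta>0$, and split $X$ into three pieces:
\[
E_\varepsilon^c,\qquad E_\varepsilon\cap F_k,\qquad E_\varepsilon\setminus F_k,\qquad\text{where } F_k=\{|f_k-f|>\eta\}.
\]
\begin{itemize}
\item On $E_\varepsilon^c$, Minkowski and the tail bounds for $f_k$ and $f$ give a contribution at most $2\varepsilon$.
\item By (i), $\mu(F_k)<\delta_\varepsilon$ for all large $k$. The second bound in (ii), applied to $f_k$ and to $f$, then gives a contribution at most $2\varepsilon$ on $E_\varepsilon\cap F_k$.
\item On $E_\varepsilon\setminus F_k$ we have $|f_k-f|\le\eta$, so the contribution is at most $\eta\,\mu(E_\varepsilon)^{1/p}$, which is small for $\eta$ small.
\end{itemize}
Hence $\limsup_k\|f_k-f\|_p\le 4\varepsilon+\eta\,\mu(E_\varepsilon)^{1/p}$, and letting $\eta\to0$ and then $\varepsilon\to0$ gives $\|f_k-f\|_p\to0$.

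The main obstacle in this argument is the first step of the sufficiency proof: showing that $f\in L^p(X)$ and that $f$ satisfies the same integrability bounds as the $f_k$. It is handled by Fatou's lemma along the almost everywhere convergent subsequence together with the finiteness of $\mu(E_\varepsilon)$.
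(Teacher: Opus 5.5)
The paper gives no proof of this statement. It is recalled from Bartle's book as a known tool, so there is no in-paper argument to compare against.

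Your argument is correct and is the standard one. For necessity, you use Chebyshev's inequality, handle the finitely many functions $f,f_1,\ldots,f_K$ individually (finite-measure level sets $\{|g|>\eta_g\}$ and absolute continuity of $A\mapsto\int_A|g|^p\,\mathrm{d}\mu$), and then apply Minkowski's inequality for $k>K$. For sufficiency, Fatou's lemma along an a.e.\ convergent subsequence transfers both bounds in (ii) to $f$. Choosing $t$ with $\mu(E_\varepsilon\cap\{|f|>t\})<\delta_\varepsilon$ gives $f\in L^p(X)$. The three-piece splitting with $F_k=\{|f_k-f|>\eta\}$ then yields $\limsup_k\|f_k-f\|_p\le 4\varepsilon+\eta\,\mu(E_\varepsilon)^{1/p}$. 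Because you estimate directly on the sets $F_k$, you do not need Egoroff's theorem, which proofs of the a.e.\ formulation typically use.

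Your sufficiency proof has the same skeleton as the paper's proof of Theorem~\ref{thm:vitali_Lambda}: Fatou on $E_\varepsilon^c$, then convergence in measure on the finite-measure set $E_\varepsilon$. The piece $E_\varepsilon\cap F_k$, where you genuinely need the $\delta_\varepsilon$-condition, is exactly what disappears in the $\Lambda^p$ setting. There the integrand $\min(|f_k-f|,1)^p$ is bounded by $1$, so its integral over $E_\varepsilon\cap F_k$ is at most $\mu(F_k)$. This is the content of the paper's remark that boundedness of the truncated integrands removes the absolute continuity condition.

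One editorial point: you assert $\sup_k\int_{E_\varepsilon}|f_k|^p\,\mathrm{d}\mu<\infty$ but never justify or use it, since you switch to the alternative route immediately. Delete that sentence.
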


In $\Lambda^p(X)$, the boundedness of the truncated integrands removes the absolute continuity condition on sets of small measure. The resulting criterion
is the following.

\begin{theorem}[Vitali convergence in $\Lambda^p(X)$] \label{thm:vitali_Lambda}
Let $(X,\Sigma,\mu)$ be a measure space, let $1\le p<\infty$, let
$\{f_k\}_{k \in \N}\subseteq \Lambda^p(X)$, and let $f$ be measurable. Then
$\{f_k\}_{k\in\N}$ converges to $f$ in $\Lambda^p(X)$ if and only if the following two
conditions hold:
\begin{enumerate}[(i)]
\item $\{f_k\}_{k\in\N}$ converges to $f$ in measure,
\smallskip
\item for every $\varepsilon>0$ there exists a measurable set
$E_\varepsilon\in\Sigma$ with $\mu(E_\varepsilon)<\infty$ such that
\[
\sup_{k \in \N}
\int_{E_\varepsilon^c}\min(|f_k|,1)^p\,\dm
<
\varepsilon^p.
\]
\end{enumerate}
In particular, when $\mu(X) < \infty$, convergence in $\Lambda^p(X)$ and convergence in measure coincide.
\end{theorem}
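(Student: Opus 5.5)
The plan is to reduce to the classical Vitali theorem (Theorem~\ref{thm:vitali_L}) by applying it to bounded surrogates of $f_k-f$, namely $g_k:=\min(|f_k-f|,1)$. Convergence $f_k\to f$ in $\Lambda^p(X)$ means precisely $g_k\to0$ in $L^p(X)$. So I will apply Theorem~\ref{thm:vitali_L} to the sequence $\{g_k\}$ with limit $0$. Since $0\le g_k\le1$, the small-sets condition is automatic once the set $F$ is taken small enough: $\int_{E\cap F} g_k^p\,\dm\le\mu(F)$, so $\delta_\eps=\eps^p$ works.

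Also, $g_k\to0$ in measure if and only if $f_k\to f$ in measure, because $\{g_k>\alpha\}=\{|f_k-f|>\alpha\}$ for $0<\alpha<1$. Hence $f_k\to f$ in $\Lambda^p(X)$ if and only if $f_k\to f$ in measure and the tail condition
\[
\sup_k\int_{E_\eps^c}\min(|f_k-f|,1)^p\,\dm<\eps^p
\]
holds for some $E_\eps$ of finite measure. It remains to show that, in the presence of convergence in measure, this tail condition on $f_k-f$ is equivalent to condition (ii) on $f_k$.

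\emph{Necessity.} Convergence in $\Lambda^p(X)$ first gives $f\in\Lambda^p(X)$ (for $k$ large, by the triangle inequality). Given $\eps$, I choose $N$ with $\|\min(|f_k-f|,1)\|_p<\eps/3$ for $k\ge N$. By monotone convergence, for each of the finitely many functions $f,f_1,\dots,f_{N-1}$ I find a finite-measure set outside which $\|\min(|\cdot|,1)\chi\|_p<\eps/3$. For instance, $\{\min(|h|,1)>1/j\}$ has finite measure and exhausts the support of $\min(|h|,1)$. Taking $E_\eps$ to be the union of these sets, the triangle inequality for the $\mathrm F$-norm (restricted to $E_\eps^c$) gives condition (ii), with bound $2\eps/3<\eps$ for $k\ge N$.

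\emph{Sufficiency.} This is the main obstacle: $f$ is only assumed measurable, so I first need $f\in\Lambda^p(X)$ and tail control of $f$. I will pass to a subsequence $f_{k_j}\to f$ a.e. By Fatou,
\[
\int_{E_\eps^c}\min(|f|,1)^p\,\dm\le\liminf_j\int_{E_\eps^c}\min(|f_{k_j}|,1)^p\,\dm\le\eps^p.
\]
Since $\int_{E_\eps}\min(|f|,1)^p\,\dm\le\mu(E_\eps)<\infty$, this gives $f\in\Lambda^p(X)$. The triangle inequality restricted to $E_\eps^c$ then bounds the tail of $\min(|f_k-f|,1)$ by $2\eps$ uniformly in $k$. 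Rescaling $\eps$ and invoking Theorem~\ref{thm:vitali_L} for $g_k$ yields $g_k\to0$ in $L^p(X)$, i.e.\ $f_k\to f$ in $\Lambda^p(X)$.

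Finally, when $\mu(X)<\infty$, condition (ii) holds trivially with $E_\eps=X$. Hence convergence in $\Lambda^p(X)$ coincides with convergence in measure, which recovers the last assertion of Theorem~\ref{thm:Lambda_F-space}.
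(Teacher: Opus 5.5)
Your proof is correct, but it uses the classical theorem in the opposite direction from the paper.

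\textbf{The paper's route.} The paper invokes Theorem~\ref{thm:vitali_L} only for necessity. Since $|\min(|a|,1)-\min(|b|,1)|\le\min(|a-b|,1)$, convergence in $\Lambda^p(X)$ gives $\min(|f_k|,1)\to\min(|f|,1)$ in $L^p(X)$. The tail half of Theorem~\ref{thm:vitali_L} applied to this sequence then yields (ii) at once. For sufficiency the paper does not use Theorem~\ref{thm:vitali_L} at all. After the same Fatou step you use, it splits $\int_X\min(|f_k-f|,1)^p\,\dm$ into two parts. On $E_\varepsilon^c$ it applies \eqref{eq:ineq_triang}. On $E_\varepsilon$ the integral tends to $0$ because the integrands are bounded by $1$, $\mu(E_\varepsilon)<\infty$, and $f_k\to f$ in measure. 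This gives $\limsup_k\int_X\min(|f_k-f|,1)^p\,\dm\le\varepsilon^p$.

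\textbf{Your route.} You build $E_\varepsilon$ by hand for necessity, which amounts to reproving the tail half of the classical theorem. For sufficiency you apply Theorem~\ref{thm:vitali_L} to $g_k=\min(|f_k-f|,1)$.

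\textbf{What each buys.} The paper's argument is shorter, and its sufficiency direction is more elementary. Yours shows that the result is literally the classical Vitali theorem for $g_k$, with the small-set condition automatic. The only real content is then trading the tail condition on $f_k-f$ for the one on $f_k$. You also verify $f\in\Lambda^p(X)$ explicitly, which you need so that $g_k\in L^p(X)$; the paper obtains this only afterwards, from the convergence itself.
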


\begin{proof}
Assume first that $\{f_k\}_{k \in \N}$ converges to $f$ in $\Lambda^p(X)$. Then $\{f_k\}_{k \in \N}$ converges to $f$ in measure and $\{\min(|f_k|,1)\}_{k \in \N}$ converges to $\min(|f|,1)$ in $L^p(X)$. The latter implies condition \textup{(ii)} by Theorem~\ref{thm:vitali_L}.

Conversely, assume \textup{(i)} and \textup{(ii)}. We prove that $\{f_k\}_{k\in\N}$ converges to $f$ in $\Lambda^p(X)$. Let $\eps>0$. By \textup{(ii)}, we
may choose a measurable set $E_\varepsilon$ with
$\mu(E_\varepsilon)<\infty$ such that
\[
\sup_{k \in \N}
\int_{E_\varepsilon^c}\min(|f_k|,1)^p\,\dm
<
\frac{\varepsilon^p}{2^{p}}.
\]
Using a subsequence converging almost everywhere to $f$ and Fatou's lemma, we
also have
\[
\int_{E_\varepsilon^c}\min(|f|,1)^p\,\dm
\le
\frac{\varepsilon^p}{2^{p}}.
\]
Therefore
\begin{align*}
\int_X\min(|f_k-f|,1)^p \, \dm  & = 
\int_{E_\varepsilon^c}\min(|f_k-f|,1)^p \, \dm +
\int_{E_\varepsilon}\min(|f_k-f|,1)^p \, \dm \\
& \leq 2^{p-1}
\int_{E_\varepsilon^c}\min(|f_k|,1)^p\,\dm +2^{p-1}
\int_{E_\varepsilon^c}\min(|f|,1)^p\,\dm \\
& \quad + \int_{E_\varepsilon}\min(|f_k-f|,1)^p \, \dm \\
& < \eps^p + \int_{E_\varepsilon}\min(|f_k-f|,1)^p \, \dm.
\end{align*}
Moreover, since $f_k\to f$ in measure and $\mu(E_\varepsilon)<\infty$, we have
\[
\int_{E_\varepsilon}\min(|f_k-f|,1)^p\,\dm \to0
\qquad \text{as } k \to \infty.
\]
Consequently
\[
\limsup_{k \to \infty} \int_X\min(|f_k-f|,1)^p \, \dm \leq \eps^p.
\]
As $\eps>0$ is arbitrary, convergence in $\Lambda^p(X)$ follows. The final
assertion is obtained by taking $E_\varepsilon=X$ in condition \textup{(ii)}
when $\mu(X)<\infty$.
\end{proof}

\section*{Acknowledgments}
This publication is based upon work supported by King Abdullah University of Science and Technology (KAUST) under Award No. ORFS-CRG12-2024-6430.

\end{document}